\documentclass[11pt]{article}

\usepackage{hyperref}
\usepackage{graphicx}
\usepackage{mathrsfs}
\hypersetup{colorlinks, linkcolor=blue, urlcolor=red, filecolor=green, citecolor=blue}

\usepackage{amsmath}
\usepackage{amssymb}
\usepackage{latexsym}
\usepackage{amsthm}
\usepackage{fullpage}
\usepackage{graphicx}
\usepackage{color}

\usepackage[utf8]{inputenc}

\newcommand{\step}[1]{\medskip\noindent\textbf{Step #1. }}
\newcommand{\substep}[1]{\medskip\noindent\textit{Substep #1. }}

\graphicspath{{figures/}}



\setlength\parindent{0pt}

%
 {\begin{list}{}%
         {\setlength{\leftmargin}{#1}}%
         \item[]%
 }
 {\end{list}}

\newtheorem{theorem}{Theorem}[section]
\newtheorem{proposition}[theorem]{Proposition}
\newtheorem{lemma}[theorem]{Lemma}

\newtheorem{definition}{Definition}[section]

\newtheorem{assumption}{Assumption}[section]

\author{Laura Lauerbach\footnote{Institute of Mathematics, University of Kassel, Heinrich-Plett-Straße 40,
34132 Kassel, Germany},
Stefan Neukamm\footnote{Faculty of Mathematics, Technische Universit\"{a}t Dresden, 01069 Dresden, Germany},
Mathias Sch\"{a}ffner\footnote{Faculty of Mathematics, TU Dortmund, 44227 Dortmund, Germany},
Anja Schl\"{o}merkemper\footnote{Institute of Mathematics,
University of W\"{u}rzburg, Emil-Fischer-Str.~40, 97074 W\"{u}rzburg, Germany}
}

\title{Onset of fracture in random heterogeneous particle chains}


\begin{document}
\maketitle

\begin{abstract}
In mechanical systems it is of interest to know the onset of fracture in dependence of the boundary conditions. Here we study a one-dimensional model which allows for an underlying heterogeneous structure in the discrete setting. Such models have recently been studied in the passage to the continuum by means of variational convergence ($\Gamma$-convergence). The $\Gamma$-limit results determine thresholds of the boundary condition, which mark a transition from purely elastic behaviour to the occurrence of a crack. In this article we provide a notion of fracture in the discrete setting and show that its continuum limit yields the same threshold as that obtained from the $\Gamma$-limit. Since the calculation of the fracture threshold is much easier with the new method, we see a good chance that this new approach will turn out useful in applications.
\end{abstract}
\medskip
\noindent
{\bf Key Words:}  Fracture, discrete system, stochastic homogenization, $\Gamma$-convergence, 
Lennard-Jones potentials. 
fracture. 

\medskip

\noindent {\bf AMS Subject Classification.}
74R10, 74Q05, 74A45,  41A60,  74G65.


\section{Introduction}
The mechanical behaviour of one-dimensional systems has been of interest for decades. Such systems serve as toy models for higher-dimensional theoretical investigations and are of interest with respect to one-dimensional structures, see, e.g., \cite{FriedrichStefanelli2020,HallHudsonvanMeurs2018,JansenKoenigSchmidtTheil2021,KimuravanMeurs2021, Truskinovsky1996}. In order to understand the effective behaviour of materials, the systems are studied as the number of particles tends to infinity.

\medskip

In this article we focus on the occurrence of cracks and continue a mathematical analysis of the effective behaviour of one-dimensional discrete systems in the passage to the continuum. In particular we strive for insight into the threshold for the overall prescribed length $\ell$ of a chain. If $\ell$ is smaller than the threshold, the system will show elastic behaviour, whereas cracks are energetically favoured if $\ell$ is larger than the threshold. The interaction potentials between the particles or atoms of the discrete chain are allowed to be in a large class of convex-concave potentials, which include for instance the classical Lennard-Jones potentials. The system is then modeled with the help of an energy functional that is the sum of all the interaction potentials, see \eqref{eq:energy=pre}. Here we restrict to the interactions of nearest neighbours; for related studies with interactions beyond nearest neighbours we refer to \cite{BraidesLewOrtiz2006,BraidesSolci2016,SchaeffnerSchloemerkemper2015}.

\medskip
In view of misplaced atoms or of chains consisting of several different kind of particles, we allow for a random distribution of the interaction potentials, see  Assumption~\ref{Ass:stochasticLJ} and \eqref{eq:energy} for details. The limit passage is then also referred to as stochastic homogenization, cf., e.g., \cite{AlicandroCicaleseGloria2011, DalMasoModica1985,IosifescuLichtMichaille2001,NeukammSchaeffnerSchloemerkemper2017}. As a special case, also materials with a periodic heterostructure are included, cf.\ also \cite{LauerbachSchaeffnerSchloemerkemper2017}.

\medskip

An appropriate mathematical technique for the passage of energy functionals from  discrete to continuous systems is based on the notion of $\Gamma$-convergence, which is a notion of a variational convergence and (under coercivity assumptions) ensures that minimizers of the discrete system converge to minimizers of the system in the continuum limit, see, e.g., \cite{BraidesDalMasoGarroni1999,BraidesGelli2002,ScardiaSchloemerkemperZanini2011} 
and references cited therein. As the number of particles tends to infinity, the energy functional converges to a functional that shows the occurrence of cracks in dependence of the boundary conditions that determine the length of the one-dimensional structure. We stress that, on the discrete level, no crack has been identified so far. Instead, the mathematical analysis works with a piecewise affine and thus continuous deformation of the discrete system. Still, a deeper understanding of the crack formation is also of interest on the discrete level.

\medskip

In this article, which is partially based on the PhD thesis \cite[Chapter~7]{Lauerbach-Diss} of L.~Lauerbach,  we focus on the emergence of cracks in atomistic chains. On the level of the continuum limiting model of the chain, ``crack'' has a clear meaning -- it is the point where the continuum deformation features  a jump and there is no interaction between the different segments separated by the jump. In contrary, on the level of a discrete chain with $n+1$ particles, the notion of ``crack'' cannot be unambigously defined, since always neighbouring particles interact. In the present paper we introduce a notion of ``onset of a crack'' at the discrete level for a chain with $n+1$ particles. For simplicity we discuss the key idea in the case of a chain with $n+1$ particles that is composed of (random) potentials that are convex around its ground state and otherwise concave, i.~e.~for deformations larger than an inflection point $z_{\mathrm{frac}}$. We call a deformation $u$ \textit{elastic}, if the individual interaction potentials along the chain are only evaluated in their convex region. In contrary, a deformation that is \textit{not} elastic invokes at least one bond that ``lives'' in the concave region of the corresponding potential. Next, we consider the energy minimizers $u_n$ of the chain with $n+1$ particles and prescribed total length $\ell>0$. If the minimizers $u_n$ are elastic for all $n\in\mathbb{N}$, then we do not expect the occurance of crack in the continuum limit; while in the other case, we expect that minimizing sequeces show a concentration of strain on a finite number of weak bonds and thus a ``crack'' emerges in the continuum limit. Based on this heuristics we intrduce a ``critical stretch'' $\ell^*_n$ for random chains with $n+1$ particles. Firstly, we prove that it converges, for $n\to\infty$, to the jump-threshold predicted by the zeroth-order $\Gamma$-limit of the discrete energy, which has been obtained earlier in \cite{unserpaper1}. Secondly, we establish a first order expansion of the critical stretch and show that the coefficients of the expansion term agree with the values predicted by the first-order $\Gamma$-limit of the discrete energy derived in \cite{unserpaper1}. Since the proofs in \cite{unserpaper1} are technically quite involved, it is interesting to learn that there is a much simpler method for the derivation of the jump threshold in the continuum limit. We expect that the new notions of a fracture point and of a jump threshold in the discrete setting turn out to be useful also in a wider class of applications.

\medskip
The outline of this article is as follows: In the second section we introduce the model in the discrete setting, including the assumptions on the large class of interaction potentials in the random setting. Further we provide the definition of a critical stretch (Definition~\ref{def:final}), which corresponds to the jump threshold. We assert the asymptotic behaviour of the critical stretch as the number of particles tends to infinity (Theorem~\ref{thm:deltaverschieden}) and compare the limit to the corresponding $\Gamma$-convergence results. Moreover, we consider a rescaled setting, define the rescaled jump threshold and assert its asymptotic behaviour as $n$ tends to infinity (Theorem~\ref{thm:deltagleich}). Finally we compare also this result with the corresponding $\Gamma$-convergences result.
All proofs are provided in Section~\ref{sec:proofs}.

\section{Setup and main results}

\begin{figure}[t]
\centering
		\includegraphics[width=0.6\linewidth]{./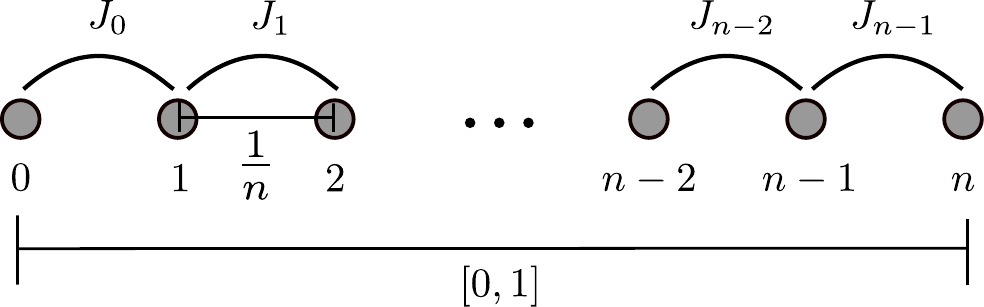}
		\caption{Chain of $n+1$ atoms with reference position $\frac{i}{n}$. The potential $J_i$ describes the nearest neighbour interaction of atom $i$ and $i+1$, $i=0,...,n-1$. The characteristic length scale is $\frac{1}{n}$ and the interval is $[0,1]$.}
\label{fig:kette}
\end{figure}

We consider a chain of $n+1$ atoms that in a reference configuration are placed at the sites in $\frac{1}{n}\mathbb{Z}\cap[0,1]$, see Figure~\ref{fig:kette}. The deformation of the atoms is referred to as $u_n: \frac{1}{n}\mathbb{Z}\cap[0,1]\rightarrow\mathbb{R}$. For the passage from discrete systems to their continuous counterparts it is useful to identify the discrete functions with their piecewise affine interpolations, i.e., with the functions in
\begin{align*}
	\mathcal{A}_n:=\left\lbrace u \in C([0,1]): u\ \text{is affine on}\ (i,i+1)\frac{1}{n},\ i\in\{0,1,...,n-1\}, \text{monotonically increasing} \right\rbrace.
\end{align*}
We shall also consider clamped boundary conditions for the chain and thus introduce for $\ell>0$ the set
\begin{equation*}
  \mathcal A_{n,\ell}:=\{u\in\mathcal A_n\,:\,u(0)=0,\,u(1)=\ell\}.
\end{equation*}
We consider a discrete energy functional of the form
\begin{align} \label{eq:energy=pre}
  \mathcal A_{n,\ell}\ni u\mapsto E_n(u):=\sum_{i=0}^{n-1}\frac{1}{n}J_i\left(\dfrac{u\left(\frac{i+1}{n}\right)-u\left(\frac{i}{n}\right)}{\frac{1}{n}}\right)=\sum_{i=0}^{n-1}\frac{1}{n}J_i\left(n\left(u\left(\frac{i+1}{n}\right)-u\left(\frac{i}{n}\right)\right)\right),
\end{align}
where $J_i:(0,\infty)\to\mathbb R$ is a potential describing the interaction between the $i$th atom and its neighbour to the right. We are interested in random heterogeneous chains of atoms, and thus assume that the potentials $\{J_i\}_{i\in\mathbb Z}$ are random with a distribution that is stationary and ergodic. We appeal to the following standard setup: Let $(\Omega,\mathcal F,\mathbb P)$ denote a probability space and $(\tau_i)_{i\in\mathbb Z}$ a family of measurable maps $\tau_i:\Omega\rightarrow\Omega$ such that
\begin{itemize}
	\item (Group property.) $\tau_0\omega=\omega$ for all $\omega\in\Omega$ and $\tau_{i_1+i_2}=\tau_{i_1}\tau_{i_2}$ for all $i_1,i_2\in\mathbb{Z}$,
	\item (Stationarity.) $\mathbb{P}(\tau_iB)=\mathbb{P}(B)$ for every $B\in\mathcal{F}$ $i\in\mathbb{Z}$,
	\item (Ergodicity.) For all $B\in\mathcal{F}$, it holds $(\tau_i(B)=B\ \forall i\in\mathbb{Z})\Rightarrow\mathbb{P}(B)=0\ \text{or}\ \mathbb{P}(B)=1$.
\end{itemize}
We then consider the energy functional
\begin{align}\label{eq:energy}
  E_n:\Omega\times\mathcal A_n\to\mathbb R\cup\{+\infty\},\qquad E_n(\omega,u):=\sum_{i=0}^{n-1}\frac{1}{n}J\left(\tau_i\omega,n\left(u\left(\frac{i+1}{n}\right)-u\left(\frac{i}{n}\right)\right)\right),
\end{align}
where the random potential satisfies the following assumptions:
\begin{assumption}
	\label{Ass:stochasticLJ}
  Let $J:\Omega\times\mathbb R\to\mathbb R\cup\{+\infty\}$ be jointly measurable  with $J(z)=\infty$ if $z\leq 0$. For $\mathbb P$-a.e.~$\omega\in\Omega$ the following conditions hold true:
  \begin{enumerate}
  \item[(A1)] (Regularity.) $J(\omega,\cdot)\in C^3(0,\infty)$. 
  \item[(A2)]\label{ass:A2} (Behavior at $0$ and $\infty$.) There exists functions  $\psi^+,\psi^-\in C(0,\infty)$, independent of $\omega$, such that
    \begin{equation*}
      \lim\limits_{z\to 0^+}\psi^-(z)=\infty\text{ and }      \lim\limits_{z\to \infty}\psi^+(z)=0,
    \end{equation*}
    and
    \begin{equation*}
      J(\omega,z)\geq \psi^-(z)\text{ for all }0<z\leq1\quad\text{and}\quad |J(\omega,z)|\leq\psi^+(z)\text{ for all }z\geq 1.
    \end{equation*}
  \item[(A3)] (Convex-monotone-structure.) Suppose strict convexity close to $0$ in form of
    \begin{equation*}
        z_{\mathrm{frac}}(\omega):=\sup\Big\{z>0\,:\,J''(\omega,s):= \partial^2_s J(\omega,s)> 0\text{ for all }s\in (0,z)\,\Big\}>0,
      \end{equation*}
      and assume that $J(\omega,\cdot)$ is monotonically increasing on $[z_{\mathrm{frac}}(\omega),\infty)$.
    \end{enumerate}
    By the previous assumptions $J(\omega,\cdot)$ is strictly convex in $(0,z_{\mathrm{frac}}]$ and admits a unique minimizer $\delta(\omega)$ that we call the ground state of $J(\omega,\cdot)$. We assume its non-degeneracy in the following sense:
    \begin{enumerate}
    \item[(A4)] (Non-degenerate ground state.) Suppose that there exists a constant $c>0$, independent of $\omega$, such that $\frac1c> \delta(\omega)>c$ and 
      \begin{equation*}
        \forall z\in \delta(\omega)+(-c,c)\,:\,c\leq J''(\omega,z)\leq\frac1c\text{ and }
        |J'''(\omega,z)|\leq \frac{1}{c}.
      \end{equation*}
  \end{enumerate}
\end{assumption}

Next, we introduce the following central quantities for a random heterogeneous chain with $n+1$ particles:

\begin{definition}[Critical stretch of a chain with $n+1$ particles] \label{def:final}
  Consider the situation of Assumption~\ref{Ass:stochasticLJ}. Let $n\in\mathbb N$ and $\omega\in\Omega$. The critical stretch $\ell^*_n(\omega)$ is defined as the largest number  such that
    \begin{equation*}
      \inf_{\mathcal A^{\rm el}_n(\omega)\cap\mathcal A_{n,\ell}}E_n(\omega,\cdot)=\inf_{\mathcal A_{n,\ell}}E_n(\omega,\cdot)\qquad\text{for all }0\leq\ell<\ell^*_n(\omega),
    \end{equation*}
  where we denote by
  \begin{equation*}
      \mathcal A^{\rm el}_n(\omega):=\Bigg\{u\in\mathcal A_n\,:\,\frac{u^{i+1}-u^i}{\frac{1}{n}}\leq       z_{\mathrm{frac}}(\tau_i\omega)\text{ for all }i=0,\ldots,n\Bigg\}
    \end{equation*}
    the set of purely elastic deformations.
\end{definition}
The idea behind the above definition is the following: A deformation $u\in\mathcal A^{\rm el}_n(\omega)$ only sees the strictly convex region of the interaction potentials. Thus, we could replace the potentials $J(\omega,i,z)$ in the definition of the energy function $E_n$ by (globally) convex potentials with superlinear growth without changing the energy for deformations in $\mathcal A_n^{\rm el}(\omega)$. As it is well-known, such energies do not allow for fracture in the continuum limit. The definition of the critical stretch implies that a prescribed macroscopic stretch (or compression) $\ell<\ell^*_n(\omega)$ can be realized by a deformation in $\mathcal A_n^{\rm el}(\omega)$ and thus prohibits the formation of a jump, while, for $\ell>\ell^*_n(\omega)$, deformations with minimial energy are required to explore the non-convex region of at least one of the interaction potentials. We may refer to the bonds $[i,i+1]$ that are evaluated outside the convex region as ``weak'' bonds. If a jump occurs in the limit, then the minimizing sequence shows a concentration of strain in the weak bonds.
We thus expect that $\ell^*_n(\omega)$ almost surely converges in the limit $n\to\infty$ to the continuum fracture threshold that can be defined on the level of the continuum $\Gamma$-limit, see below. In our first result we prove that  $\ell^*_n$ indeed converges and we identify its limit, which is the statistical mean of the ground states: 
\begin{theorem}
	\label{thm:deltaverschieden}
	Let Assumption~\ref{Ass:stochasticLJ} be fulfilled. Then,
	\begin{align*}
	\lim\limits_{n\to\infty}\ell_n^*(\omega)=\mathbb{E}[\delta]\quad\text{for $\mathbb{P}$-a.e. }\omega\in\Omega.
	\end{align*}
	
\end{theorem}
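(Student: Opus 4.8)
The plan is to sandwich $\ell_n^*(\omega)$, for $n$ large, between $\tfrac1n\sum_{i=0}^{n-1}\delta(\tau_i\omega)$ and an arbitrary number $\ell>\mathbb E[\delta]$, and then pass to the limit via Birkhoff's ergodic theorem: $\delta$ is bounded by (A4), hence $\tfrac1n\sum_{i=0}^{n-1}\delta(\tau_i\omega)\to\mathbb E[\delta]$ for $\mathbb P$-a.e.~$\omega$. Throughout I abbreviate, for $u\in\mathcal A_n$, its bond strains $z_i:=n\bigl(u(\tfrac{i+1}n)-u(\tfrac in)\bigr)$, and write $\phi_i:=J(\tau_i\omega,\cdot)$, $\delta_i:=\delta(\tau_i\omega)$, $z_i^{\mathrm f}:=z_{\mathrm{frac}}(\tau_i\omega)$, so that $E_n(\omega,u)=\tfrac1n\sum_i\phi_i(z_i)$, membership in $\mathcal A_{n,\ell}$ reads $z_i>0,\ \tfrac1n\sum_i z_i=\ell$, and membership in $\mathcal A_n^{\mathrm{el}}(\omega)$ reads additionally $z_i\le z_i^{\mathrm f}$. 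I first note that (A4) forces $z^{\mathrm f}_i\ge\delta_i+c$ (since $J''(\omega,\cdot)\ge c>0$ on a neighbourhood of $\delta_i$ reaching beyond $\delta_i$), so $\delta_i$ is an \emph{interior} minimizer of the strictly convex function $\phi_i|_{(0,z^{\mathrm f}_i)}$, whence $\phi_i'(\delta_i)=0$, $\phi_i$ is decreasing on $(0,\delta_i]$, and $\phi_i$ attains its global minimum on $(0,\infty)$ at $\delta_i$ (using that $\phi_i$ is increasing on $[z^{\mathrm f}_i,\infty)$ by (A3)).

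\emph{Lower bound.} I would show $\inf_{\mathcal A^{\mathrm{el}}_n(\omega)\cap\mathcal A_{n,\ell}}E_n(\omega,\cdot)=\inf_{\mathcal A_{n,\ell}}E_n(\omega,\cdot)$ for every $\ell\le\tfrac1n\sum_i\delta_i$, which yields $\ell_n^*(\omega)\ge\tfrac1n\sum_i\delta_i$ and hence $\liminf_n\ell_n^*(\omega)\ge\mathbb E[\delta]$. Only ``$\le$'' requires work. Given a finite-energy competitor $u\in\mathcal A_{n,\ell}$, truncate its strains to $\tilde z_i:=\min(z_i,\delta_i)\in(0,\delta_i]$; by the monotonicity and minimality just recorded, $\phi_i(\tilde z_i)\le\phi_i(z_i)$. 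Since $\sum_i\tilde z_i\le\sum_i z_i=n\ell\le\sum_i\delta_i$, one can raise the $\tilde z_i$ inside the intervals $[\tilde z_i,\delta_i]$ until their sum equals $n\ell$; because $\phi_i$ is decreasing on $(0,\delta_i]$, this only lowers the energy, and the resulting strains $\hat z_i\le\delta_i\le z^{\mathrm f}_i$ define $\hat u\in\mathcal A^{\mathrm{el}}_n(\omega)\cap\mathcal A_{n,\ell}$ with $E_n(\omega,\hat u)\le E_n(\omega,u)$; taking the infimum over $u$ proves the claim.

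\emph{An a priori bound on $z_{\mathrm{frac}}$.} For the reverse inequality I would next establish that $z_{\mathrm{frac}}(\omega)\le N_0$ for a deterministic $N_0$. From (A2), $\psi^-,\psi^+$ are bounded on $[c,1]$ and $[1,1/c]$ respectively, which together with $\delta(\omega)\in(c,1/c)$ gives a deterministic lower bound $J(\omega,\delta(\omega))\ge-C_0$. Since $J'(\omega,\cdot)$ is strictly increasing on $(0,z_{\mathrm{frac}}(\omega))$ with $J'(\omega,\delta(\omega))=0$, (A4) yields $J'(\omega,\delta(\omega)+c)\ge c^2$, hence $J'(\omega,z)\ge c^2$ and $J(\omega,z)\ge -C_0+c^2\bigl(z-\delta(\omega)-c\bigr)$ for $z\in(\delta(\omega)+c,z_{\mathrm{frac}}(\omega))$. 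Comparing this linear growth with $|J(\omega,z)|\le\psi^+(z)\le\sup_{[1,\infty)}\psi^+<\infty$ for $z\ge1$ forces $z\le N_0$ for a deterministic $N_0$, hence $z_{\mathrm{frac}}(\omega)\le N_0$. The same estimate gives, for any elastic bond with $\delta_i<z_i\le z^{\mathrm f}_i$, the quadratic lower bound $\phi_i(z_i)-\phi_i(\delta_i)\ge\tfrac c2\min\!\bigl((z_i-\delta_i)^2,c^2\bigr)$ (quadratic growth near $\delta_i$ from $J''\ge c$, then monotonicity), while $\phi_i(z_i)\ge\phi_i(\delta_i)$ always.

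\emph{Upper bound.} Fix $\ell>\mathbb E[\delta]$; for a.e.~$\omega$ and $n$ large, $\tfrac1n\sum_i\delta_i<\ell$. Testing the unconstrained problem with $z_i=\delta_i$ for $i\ge1$ and $z_0=n\ell-\sum_{i\ge1}\delta_i=n\bigl(\ell-\tfrac1n\sum_i\delta_i\bigr)+\delta_0\to\infty$ gives energy $\tfrac1n\sum_i\phi_i(\delta_i)+\tfrac1n\bigl(\phi_0(z_0)-\phi_0(\delta_0)\bigr)=\tfrac1n\sum_i\phi_i(\delta_i)+o(1)$, since $\phi_0(\delta_0)$ is a fixed number and $|\phi_0(z_0)|\le\sup_{[1,\infty)}\psi^+$; hence $\inf_{\mathcal A_{n,\ell}}E_n(\omega,\cdot)\le\tfrac1n\sum_i\phi_i(\delta_i)+o(1)$. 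For the elastic problem, writing $S:=\{i:z_i>\delta_i\}$ and using $\phi_i\ge\phi_i(\delta_i)$ everywhere, the quadratic bound on $S$, then $\min((z_i-\delta_i)^2,c^2)\ge\tfrac{c^2}{N_0^2}(z_i-\delta_i)^2$ (valid as $z_i-\delta_i\le z^{\mathrm f}_i\le N_0$), Cauchy--Schwarz over $|S|\le n$ indices, and $\sum_{i\in S}(z_i-\delta_i)\ge\sum_i(z_i-\delta_i)=n\ell-\sum_i\delta_i$, one gets
\begin{equation*}
E_n(\omega,u)\ \ge\ \tfrac1n\sum_i\phi_i(\delta_i)+\tfrac{c^3}{2N_0^2}\Bigl(\ell-\tfrac1n\sum_i\delta_i\Bigr)^2\ \ge\ \tfrac1n\sum_i\phi_i(\delta_i)+\kappa
\end{equation*}
for $n$ large, with $\kappa:=\tfrac{c^3}{4N_0^2}(\ell-\mathbb E[\delta])^2>0$ independent of $n$ (and trivially when no elastic competitor exists). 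Since $o(1)<\kappa$ eventually, the two infima differ at this $\ell$, so $\ell_n^*(\omega)\le\ell$ for $n$ large; letting $\ell\downarrow\mathbb E[\delta]$ gives $\limsup_n\ell_n^*(\omega)\le\mathbb E[\delta]$, which together with the lower bound proves the theorem. The step I expect to be the real obstacle is precisely the deterministic cap $z_{\mathrm{frac}}(\omega)\le N_0$ and the ensuing uniform quadratic gap $\kappa$: without an a priori bound on the strain a single weak bond can sustain, the excess length $\ell-\mathbb E[\delta]$ could be concentrated on $o(n)$ bonds and the Cauchy--Schwarz estimate would collapse; extracting this cap from (A2) and (A4) is where the specific structure of the admissible potentials is genuinely used, while the rest is soft.
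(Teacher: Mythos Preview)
Your proof is correct and follows the same overall strategy as the paper's: the ergodic theorem for $\tfrac1n\sum_i\delta_i$, a deterministic cap on $z_{\mathrm{frac}}$ (your ``a priori bound'' is precisely the paper's Lemma establishing $z_{\mathrm{frac}}^{\mathrm{sup}}<\infty$, with essentially the same derivation), and a comparison of the two infima via a one-large-bond competitor against a quadratic lower bound for elastic deformations. The tactical choices differ in two places. For the lower bound the paper works with a \emph{minimizer} and the Lagrange-multiplier condition: if $\ell<\tfrac1n\sum_i\delta_i$ then some $\bar z_n^{\hat i}<\delta_{\hat i}$, so the common value $\Lambda=J_i'(\bar z_n^i)$ is negative, whence all $\bar z_n^i<\delta_i\le z_{\mathrm{frac}}^i$; your truncate-then-raise projection builds an elastic competitor from an \emph{arbitrary} competitor without ever invoking first-order conditions, which is a clean constructive alternative. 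For the upper bound the paper uses a counting argument---bound from below the fraction of bonds with $\bar z_n^i>\delta_i+\epsilon$ using the cap $z_{\mathrm{frac}}^{\mathrm{sup}}$, then multiply by the per-bond gain $\tfrac{c}{2}\epsilon^2$---whereas you convert the same cap into a pointwise inequality $\min((z_i-\delta_i)^2,c^2)\ge \tfrac{c^2}{N_0^2}(z_i-\delta_i)^2$ and then apply Cauchy--Schwarz. Both extract the same uniform energy gap from the same ingredient; neither route is materially simpler than the other.
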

(The proof of Theorem~\ref{thm:deltaverschieden} can be found in Section~\ref{sec:proofthmdeltaverschieden}.)\\

Next we consider the special case when $\delta(\omega)$ is deterministic, say $\delta(\omega)=1$ for $\mathbb P$-a.s. In that case we establish a first order expansion of $\ell^*_n(\omega)$ around its limit $\mathbb E[\delta]=1$ of the form
\begin{equation*}
  \ell_n^*(\omega)\approx 1+\sqrt{\frac{1}{n}}\sqrt{\frac{\beta}{\underline\alpha}},
\end{equation*}
where $\beta$ is related to the maximal energy barrier among the random potentials $J$, and $1/\underline\alpha$ is the statistical mean of the curvatures of the random potentials at the ground state.
\begin{theorem}
	\label{thm:deltagleich}
	Let Assumption~\ref{Ass:stochasticLJ} be satisfied and assume $\delta(\omega)=1$ for $\mathbb P$-a.~e.~$\omega\in\Omega$. Consider the rescaled jump threshold $\gamma_n^*(\omega):=\dfrac{\ell_n^*(\omega)-1}{\sqrt{\frac{1}{n}}}$.
        Then
	\begin{align*}
          \lim\limits_{n\to\infty}\gamma_n^*(\omega)=\lim\limits_{n\to\infty}\dfrac{\ell_n^*(\omega)-1}{\sqrt{\frac{1}{n}}}=\sqrt{\frac{\beta}{\underline{\alpha}}}\qquad\text{for }\mathbb P\text{-a.e.~}\omega\in\Omega,
	\end{align*}
	where
        \begin{equation} \label{def:alphabeta}
          \underline\alpha:=\left(\mathbb{E}\Big[\big(\tfrac{1}{2} J''(\omega,1)\big)^{-1}\Big]\right)^{-1}\text{ and }\quad \beta:={\rm ess}\inf_{\omega\in\Omega}(-J(\omega,1)).
          \end{equation}
\end{theorem}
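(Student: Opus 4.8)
The plan is to compute the critical stretch $\ell^*_n(\omega)$ explicitly (up to the relevant order) by analyzing the two sides of the defining equation in Definition~\ref{def:final}. The right-hand side is the global infimum $\inf_{\mathcal A_{n,\ell}}E_n(\omega,\cdot)$; the left-hand side restricts to elastic deformations. The critical stretch is the supremum of $\ell$ for which these agree, equivalently the smallest $\ell$ at which a ``weak-bond'' competitor becomes strictly cheaper. So the two ingredients are: (i) a sharp lower bound / asymptotic formula for the elastic minimum as a function of $\ell$; and (ii) the cost of the cheapest one-weak-bond competitor. Concretely, I would first observe that, since $\delta(\omega)=1$ a.s., the elastic minimizer at stretch $\ell = 1 + \gamma/\sqrt{n}$ distributes a total excess length $\ell - 1 = \gamma/\sqrt n$ among the $n$ bonds, each staying in the convex region; by strict convexity and (A4), the optimal distribution is governed by a quadratic expansion $J(\omega_i, 1 + t) \approx J(\omega_i,1) + \tfrac12 J''(\omega_i,1) t^2$, and the standard ``parallel springs'' computation gives

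\begin{equation*}
  \inf_{\mathcal A^{\rm el}_n\cap\mathcal A_{n,\ell}} E_n(\omega,\cdot)
  = \frac1n\sum_{i=0}^{n-1} J(\tau_i\omega,1) \;+\; \frac{(\ell-1)^2}{n}\,\underline\alpha_n(\omega) \;+\; o\!\left(\tfrac1n\cdot\tfrac1n\right),
\end{equation*}

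where $\underline\alpha_n(\omega) = \big(\frac1n\sum_i (\tfrac12 J''(\tau_i\omega,1))^{-1}\big)^{-1} \to \underline\alpha$ by the ergodic theorem. (One must be careful that the error term is genuinely lower order than $(\ell-1)^2/n \sim \gamma^2/n^2$; this is where (A4), i.e.\ the uniform $C^3$ bound near the ground state, enters, controlling the cubic remainder uniformly in $i$.)

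**Second ingredient and the comparison.** For the global infimum, the cheapest way to ``cheat'' is to send one bond — say bond $j$ — far into the concave/monotone region, where its energy contribution is essentially $\inf_z J(\omega_j, z)/n$, and relax the remaining $n-1$ bonds elastically to absorb the rest of the length. By (A2)–(A3), $J(\omega,\cdot)$ is monotone on $[z_{\rm frac}(\omega),\infty)$ with $\lim_{z\to\infty}J^+(z)=0$; I would use this to show $\inf_z J(\omega_j,z)$ is attained essentially at $z=1$ (the ground state) OR that the relevant quantity is $-J(\omega_j,1)$ appearing as a ``released'' energy — more precisely, breaking bond $j$ replaces its elastic contribution by something no larger than $\frac1n J(\omega_j,1) - \frac1n\cdot(\text{barrier saved})$, and the net effect compared to the elastic configuration is to subtract roughly $\frac1n\big(-J(\omega_j,1) - (\text{residual positive energy})\big)$; optimizing the choice of $j$ picks out $\operatorname{ess\,inf}_\omega(-J(\omega,1)) = \beta$ in the limit via the ergodic theorem applied to extrema (i.e.\ $\max_{0\le j\le n-1}(-J(\tau_j\omega,1)) \to \operatorname{ess\,sup}(-J(\cdot,1))$ — wait, sign: we want the bond whose breaking is cheapest, i.e.\ minimizing $J(\omega_j,1)$, equivalently maximizing $-J(\omega_j,1)$, and $\max_j(-J(\tau_j\omega,1)) \to \operatorname{ess\,sup}_\omega(-J(\omega,1))$; reconciling this with the stated $\beta = \operatorname{ess\,inf}$ requires tracking that breaking a bond also forfeits a positive barrier, so the actual quantity is $\operatorname{ess\,inf}_\omega$ of a difference — I would verify this sign bookkeeping carefully against \cite{unserpaper1}). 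Setting the elastic cost equal to the one-broken-bond cost and solving $\gamma^2 \underline\alpha / n^2 \sim \beta/n^2$ up to lower order yields $\gamma^*_n(\omega) \to \sqrt{\beta/\underline\alpha}$.

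**Execution order and main obstacle.** I would carry this out as: (1) fix the quadratic expansion of $E_n$ on $\mathcal A^{\rm el}_n$ and derive the parallel-springs formula with uniform error control from (A4); (2) apply the ergodic theorem to identify $\underline\alpha_n \to \underline\alpha$; (3) analyze one-weak-bond competitors, using (A2)–(A3) to reduce the broken-bond contribution to an explicit $\omega$-dependent constant, and use an ergodic/extremal argument to extract $\beta$; (4) show that competitors with two or more weak bonds are never advantageous at this scale (each additional broken bond costs an extra $\Theta(1/n)$ barrier while the elastic relaxation only saves $O(1/n^2)$); (5) equate the two leading behaviors and solve for $\gamma^*_n$, then pass to the limit. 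The main obstacle I anticipate is step (3)–(4): making the ``break exactly one bond, and it is the weakest'' heuristic rigorous — one must rule out intermediate configurations (bonds parked at moderate strain in the concave region rather than at infinity or at the ground state) and control the interaction between the choice of broken bond and the elastic relaxation of the others, all while keeping errors below the $1/n^2$ threshold. A secondary delicate point is the exact definition of $\beta$: verifying that the constant that emerges from the variational computation is precisely $\operatorname{ess\,inf}_\omega(-J(\omega,1))$ as in \eqref{def:alphabeta}, rather than an essential supremum or a shifted quantity, which hinges on correctly accounting for the energy released versus the barrier paid when a bond transitions from elastic to broken.
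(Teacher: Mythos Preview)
Your overall strategy---compare the elastic ``parallel-springs'' cost to a one-broken-bond competitor and equate---is the paper's strategy too, but two concrete points would derail your execution as written.

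First, the sign issue you flag is in fact an error, not just bookkeeping. Breaking bond $j$ replaces the contribution $\tfrac1nJ_j(1)<0$ by $\tfrac1nJ_j(\text{large})\approx 0$ via (A2), so the energy \emph{increases} by $\tfrac1n(-J_j(1))$. The cheapest break is therefore at the bond with the \emph{smallest} value of $-J_j(1)$ (the shallowest well), and the limiting cost is $\min_{j}\big(-J(\tau_j\omega,1)\big)\to\operatorname{ess\,inf}_\omega\big(-J(\omega,1)\big)=\beta$, exactly as in \eqref{def:alphabeta}; your ``minimizing $J(\omega_j,1)$'' is the opposite. Note also that $\min_j(-J(\tau_j\omega,1))\to\beta$ is not a consequence of the ergodic theorem for averages; the paper handles it separately by applying ergodicity to the indicator $\chi_{\{-J(\cdot,1)\le\beta+\epsilon/2\}}$. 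A minor related point: your elastic excess should be $\underline\alpha_n(\omega)(\ell-1)^2$, not $(\ell-1)^2\underline\alpha_n(\omega)/n$; both sides of the balance are of order $1/n$, not $1/n^2$.

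Second, the obstacle you correctly identify---ruling out minimizers with bonds parked in the concave region at moderate strain---is precisely what the paper resolves through a preparatory structural proposition, and without it steps (3)--(4) of your plan do not close. The Euler--Lagrange condition forces a common value $\Lambda=J_i'(\bar z_n^i)$; since some $\bar z_n^{i_2}\le 1+n^{-1/2}D$, (A4) gives $0\le\Lambda\le c^{-1}Dn^{-1/2}$. This already forces each $\bar z_n^i$ either into $[1,1+c^{-2}n^{-1/2}D]$ or beyond $z_{\rm frac}^i$; a competitor argument (redistribute a suspected intermediate bond over the elastic ones, whose Taylor expansion you now control) then shows every non-elastic component must in fact satisfy $\bar z_n^i\ge N_n$ with $N_n\to\infty$. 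This dichotomy is the missing ingredient: for the upper bound on $\gamma_n^*$ it pins down the elastic minimizer so your cubic remainder really is lower order, and for the lower bound it reduces ``non-elastic minimizer'' to ``some bond at infinity,'' which costs at least $\beta-o(1)>\beta(1-\epsilon)$ and yields the contradiction.
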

(The proof of Theorem~\ref{thm:deltagleich} can be found in Section~\ref{sec:proofthmdeltagleich}.)\\

We finally relate the above results to the zeroth- and first-order $\Gamma$-limits of $E_n$ subject to clamped boundary conditions, i.e.~
\begin{equation*}
  E_n^\ell(\omega,\cdot):L^1(0,1)\to\mathbb R\cup\{+\infty\},\quad E_n^\ell(\omega,u):=
  \begin{cases}
    E_n(\omega,u)&\text{if }u\in\mathcal A_{n,\ell},\\
    +\infty&\text{else.}
  \end{cases}
\end{equation*}
The zeroth-order $\Gamma$-limit of the discrete energy yields a homogenized energy functional
\begin{align*}
  E_{\mathrm{hom}}^{\ell}=\int_{0}^{1}J_{\mathrm{hom}}(u'(x)),
\end{align*}
recalled from \cite{unserpaper1}, and adjusted to the stronger assumptions of this paper and $K=1$, 
	where the homogenized energy density map $z\mapsto J_{\mathrm{hom}}(z)$ is convex, lower semicontinuous, monotonically decreasing and satisfies
	\begin{align}\label{superlinwachstum}
	\lim\limits_{z\rightarrow0^+}J_{\mathrm{hom}}(z)=+\infty.
	\end{align}
	Moreover, the minimum values of $E_{n}^{\ell}(\omega,\cdot)$ and $E_{\mathrm{hom}}^\ell$ satisfy
	\begin{align*}
	\lim\limits_{n\to\infty}\inf_u E_{n}^{\ell}(\omega,u)=\min_u E_{\mathrm{hom}}^{\ell}(u)=J_{\mathrm{hom}}(\ell),
	\end{align*}
    and therefore can be calculated as
\begin{align*}
\min_u E_{\mathrm{hom}}^{\ell}(u)=J_{\mathrm{hom}}(\ell)=\begin{cases}
J_{\mathrm{hom}}(\ell)&\quad\text{for}\ \ell<\mathbb{E}[\delta],\\[1mm]
J_{\mathrm{hom}}(\mathbb{E}[\delta])&\quad\text{for}\ \ell\geq\mathbb{E}[\delta].
\end{cases}
\end{align*}
Hence, the threshold between the elastic and the jump regimes is $\mathbb{E}[\delta]$, which is identical to the limit of $\ell^*_n(\omega)$, see Theorem~\ref{thm:deltaverschieden}. Secondly,
we recall a $\Gamma$-limit result from \cite{unserpaper2} for the rescaled energy functional
\begin{align*}
  H_n(\omega,v):=\sum_{i=0}^{n-1}\left(J\left(\tau_i\omega,\dfrac{v\left(\frac{i+1}{n}\right)-v\left(\frac{i}{n}\right)}{\sqrt{\frac{1}{n}}}+\delta(\tau_i\omega)\right)-J\left(\tau_i\omega,\delta(\tau_i\omega)\right) \right),
\end{align*}
where the $\Gamma$-limit is given by
\begin{align*}
H^{\gamma}(v)=\underline{\alpha}\displaystyle \int_{0}^{1}\left|v'(x) \right|^2\,\mathrm{d}x+\beta\#S_v,
\end{align*}
with homogenized elastic coefficient $\underline{\alpha}$, jump parameter $\beta$ and $\#S_v$ being the number of jumps of $v$.
	
	Moreover, for $\gamma>0$ it holds true that
	\begin{align*}
	\lim\limits_{n\rightarrow\infty}\inf_v H_n^{\gamma_n}(\omega,v)=\min_vH^{\gamma}(v)=\min\{\underline{\alpha}\gamma^2,\beta\},
	\end{align*}
which yields that the minima of the energy are given by
\begin{align*}
\min_vH^{\gamma}(v)=\min\{\underline{\alpha}\gamma^2,\beta\}=\begin{cases}
\underline{\alpha}\gamma^2&\quad\text{if}\ \gamma<\sqrt{\frac{\beta}{\underline{\alpha}}},\\[2mm]
\beta&\quad\text{if}\ \gamma\geq\sqrt{\frac{\beta}{\underline{\alpha}}}.
\end{cases}
\end{align*}
Hence the threshold between elasticity and fracture in the rescaled case is $\sqrt{\frac{\beta}{\underline{\alpha}}}$, which equals the limit of the jump threshold $\gamma_n^*$ in Theorem~\ref{thm:deltagleich}.

In summary, although the techniques by which the results are calculated are completely different, they yield the same result regarding the jump threshold in the continuum setting. The derivation of the limiting jump threshold with help of the newly defined jump threshold in the discrete setting is, however, much easier and thus is of interest for applications. It remains an open problem to analyze corresponding questions in higher dimensional settings. In the following section we provide the proofs of the above theorems.

\section{Proofs} \label{sec:proofs}

For the upcoming analysis it is convenient to introduce the notation 
  \begin{equation*}
    M_n(\omega,\ell):=\min\biggl\{\frac1n\sum_{i=1}^{n}J(\tau_i\omega,z^i)\,: \,\frac1n\sum_{i=1}^nz^i=\ell\,\biggr\}
  \end{equation*}
  to denote the minimum energy of a discrete chain of length $\ell$.  We begin with an elementary (yet, convenient) reformulation of the critical stretch $\ell_n^*$ (cf.~Definition~\ref{def:final}).

\begin{lemma}\label{L:reformellstar}
  Consider the situation of Assumption~\ref{Ass:stochasticLJ}. Let $n\in\mathbb N$ and $\omega\in\Omega$. Then, it holds
  \begin{align}\label{def:Mnell}
    M_n(\omega,\ell)=\min_{u\in\mathcal A_{n,\ell}}E_n(\omega,u).
  \end{align}
  Moreover, $\ell_n^*(\omega)$ is the largest number such that for all $0<\ell<\ell_n^*(\omega)$ there exists $\bar z\in\mathbb R^n$ satisfying
  \begin{equation}\label{eq:lemma1}
    M_n(\omega,\ell)=\frac1n\sum_{i=1}^nJ(\tau_i\omega,\bar z^i),\quad \frac1n\sum_{i=1}^n\bar z^{i}=\ell,\quad \bar z^i\leq z_{\rm frac}(\tau_i\omega)\quad\forall i\in\{1,\dots,n\}.
  \end{equation}
\end{lemma}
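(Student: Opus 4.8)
The plan is to reduce everything to the change of variables that replaces a deformation by its vector of scaled slopes. Introduce the affine map $u\mapsto z(u)=(z^1,\dots,z^n)$, $z^i:=n\big(u(\tfrac{i}{n})-u(\tfrac{i-1}{n})\big)$; by the monotonicity built into $\mathcal A_n$ this is a bijection from $\mathcal A_{n,\ell}$ onto $\{z\in[0,\infty)^n:\tfrac1n\sum_{i=1}^nz^i=\ell\}$, and since $J(\omega,\cdot)\equiv+\infty$ on $(-\infty,0]$ only the slice with $z^i>0$ for all $i$ carries finite energy. Under this identification $E_n(\omega,u)=\tfrac1n\sum_{i=1}^nJ(\tau_i\omega,z^i)$ (up to the bond relabeling implicit in the two index conventions used for $E_n$ and $M_n$), so $\min_{u\in\mathcal A_{n,\ell}}E_n(\omega,u)$ is exactly the minimization problem defining $M_n(\omega,\ell)$; this is \eqref{def:Mnell} as soon as one knows the infimum is attained. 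Moreover the same map carries $\mathcal A_n^{\rm el}(\omega)\cap\mathcal A_{n,\ell}$ onto $\{z:\tfrac1n\sum_{i=1}^nz^i=\ell,\ 0<z^i\le z_{\rm frac}(\tau_i\omega)\ \forall i\}$.

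Attainment is the one genuine (but short) point, and I would get it from (A2). For fixed $\omega,n$ and $\ell>0$ the vector $z^i\equiv\ell$ is admissible with finite energy (finiteness of $J(\omega,\ell)$ from (A1) when $\ell\le1$, from (A2) when $\ell\ge1$), so the infimum is finite. Along a minimizing sequence each $z^i$ lies in $(0,n\ell]$ because of the constraint, and if $z^i\to0$ for some $i$ then $J(\tau_i\omega,z^i)\ge\psi^-(z^i)\to+\infty$ while every other term is bounded below by a constant depending only on $\psi^\pm$, $n$, $\ell$ (here it matters that $\psi^\pm$ do not depend on $\omega$) --- a contradiction. Hence the sequence is precompact in $(0,\infty)^n$ and a subsequence converges to a minimizer; the identical argument works for the elastic problem, where in addition the upper bounds $z^i\le z_{\rm frac}(\tau_i\omega)$ hold.

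For the second assertion I would prove that, for each fixed $\ell>0$, the equality $\inf_{\mathcal A_n^{\rm el}(\omega)\cap\mathcal A_{n,\ell}}E_n(\omega,\cdot)=\inf_{\mathcal A_{n,\ell}}E_n(\omega,\cdot)$ appearing in Definition~\ref{def:final} holds if and only if \eqref{eq:lemma1} is solvable. If $\bar z$ solves \eqref{eq:lemma1} then $\bar z^i>0$ for all $i$ (because $M_n(\omega,\ell)<\infty$ forces each $J(\tau_i\omega,\bar z^i)<\infty$), so $\bar z$ corresponds to some $\bar u\in\mathcal A_n^{\rm el}(\omega)\cap\mathcal A_{n,\ell}$ with $E_n(\omega,\bar u)=M_n(\omega,\ell)$; since $\mathcal A_n^{\rm el}(\omega)\cap\mathcal A_{n,\ell}\subseteq\mathcal A_{n,\ell}$ this squeezes the two infima to the common value $M_n(\omega,\ell)$. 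Conversely, if the equality holds then in particular $\inf_{\mathcal A_n^{\rm el}(\omega)\cap\mathcal A_{n,\ell}}E_n(\omega,\cdot)=M_n(\omega,\ell)<\infty$, so the elastic set is nonempty and, by the attainment step, its infimum is realized by some $\bar u$; the slope vector of $\bar u$ then satisfies the constraint, the fracture bounds, and realizes $M_n(\omega,\ell)$, i.e.\ \eqref{eq:lemma1}. Finally, at $\ell=0$ the only admissible deformation is $u\equiv0$ and both infima equal $+\infty$, so the set of $\ell\ge0$ for which the Definition~\ref{def:final} equality holds and the set of $\ell>0$ for which \eqref{eq:lemma1} is solvable agree up to the point $\ell=0$; hence the largest $\ell_n^*(\omega)$ for which the former holds on $[0,\ell_n^*(\omega))$ equals the largest one for which \eqref{eq:lemma1} is solvable on $(0,\ell_n^*(\omega))$, which is the claim.

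The lemma is essentially bookkeeping, so there is no deep obstacle; the two things to be careful about are the off-by-one between the $\tau_i$-labelings in the definitions of $E_n$ and $M_n$, which I absorb into the definition of $z(u)$ and the labeling of bonds, and the compactness argument for attainment, which relies on the $\omega$-independence of the growth functions $\psi^\pm$ in (A2) so that it runs for the fixed realization $\omega$.
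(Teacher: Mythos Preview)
Your proposal is correct and follows essentially the same route as the paper: the change of variables $z^i=n\big(u(\tfrac{i}{n})-u(\tfrac{i-1}{n})\big)$ together with the direct method for attainment, followed by the equivalence between the equality in Definition~\ref{def:final} and solvability of \eqref{eq:lemma1} at each fixed $\ell$. The paper is terser---it just invokes ``the direct method'' and the compactness of $\mathcal A_n^{\rm el}(\omega)\cap\mathcal A_{n,\ell}$---whereas you spell out the coercivity argument via $\psi^\pm$ and are explicit about the index shift between the $\tau_i$-labeling in $E_n$ (which runs over $i=0,\dots,n-1$) and in $M_n$ (which runs over $i=1,\dots,n$); both are reasonable points to flag.
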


\begin{proof}[Proof of Lemma~\ref{L:reformellstar}]

The identity \eqref{def:Mnell} follows by a simple change of variables, that is by setting $z^i=n(u(\frac{i}n)-u(\frac{i-1}n))$, and the direct method of the calculus of variations. 

Next we give an argument regarding the characterization of $\ell_n^*$. The definition of $\ell_n^*(\omega)$, see  Definition~\ref{def:final}, and \eqref{def:Mnell} imply
$$
\inf_{\mathcal A^{\rm el}_n(\omega)\cap\mathcal A_{n,\ell}}E_n(\omega,\cdot)=M_n(\omega,\ell)<\infty\qquad\forall \ell\in (0,\ell_n^*(\omega)).
$$
Since $\mathcal A^{\rm el}_n(\omega)\cap\mathcal A_{n,\ell}$ is compact, there exists $\bar u\in \mathcal A^{\rm el}_n(\omega)\cap\mathcal A_{n,\ell}$ such that 
$$E_n(\omega,\bar u)=\inf_{\mathcal A^{\rm el}_n(\omega)\cap\mathcal A_{n,\ell}}E_n(\omega,\cdot).$$
Clearly, $\bar z\in \mathbb R^n$ defined as $\bar z^{i}=n(\bar u(\frac{i}n)-\bar u(\frac{i-1}i))$ satisfies \eqref{eq:lemma1}. 

Now we suppose that for some $\ell\geq \ell_n^*$ there exists $\bar z\in\mathbb R^n$ satisfying \eqref{eq:lemma1}. With help of the same change of variables as above, we find $\bar u\in\mathcal A^{\rm el}_n(\omega)\cap\mathcal A_{n,\ell}$ satisying $E_n(\omega,\bar u)=M_n(\omega,\ell)$ which contradicts the definition of $\ell_n^*$.
\end{proof}

\begin{lemma}\label{prop:derivatives}
Let Assumption~\ref{Ass:stochasticLJ} be satisfied. Then, $J(\omega,\cdot)$ is increasing on $[\delta(\omega),\infty)$ and it holds
\begin{equation}\label{def:zfracsup}
z_{\mathrm{frac}}^{\mathrm{sup}}:=\sup\{ z_{\mathrm{frac}}(\omega)\, :\, \omega\in\Omega\}<\infty.
\end{equation}
\end{lemma}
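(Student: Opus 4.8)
The plan is to prove the two assertions of Lemma~\ref{prop:derivatives} separately, both relying on the convex-monotone structure in (A3) and the uniform bounds in (A2) and (A4).

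First, I would establish monotonicity of $J(\omega,\cdot)$ on $[\delta(\omega),\infty)$. By (A3), $J(\omega,\cdot)$ is strictly convex on $(0,z_{\mathrm{frac}}(\omega)]$, so $J'(\omega,\cdot)$ is strictly increasing there; since $\delta(\omega)$ is the minimizer of this strictly convex restriction and lies in $(0,z_{\mathrm{frac}}(\omega)]$ by (A4) together with (A3), we have $J'(\omega,\delta(\omega))=0$ and hence $J'(\omega,z)>0$ for $z\in(\delta(\omega),z_{\mathrm{frac}}(\omega)]$. Thus $J(\omega,\cdot)$ is increasing on $[\delta(\omega),z_{\mathrm{frac}}(\omega)]$. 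On $[z_{\mathrm{frac}}(\omega),\infty)$ it is increasing by assumption (A3). Concatenating the two intervals yields monotonicity on all of $[\delta(\omega),\infty)$. (One small point to check: that $\delta(\omega)\le z_{\mathrm{frac}}(\omega)$, which is exactly the sentence in (A3)--(A4) stating $J(\omega,\cdot)$ is strictly convex on $(0,z_{\mathrm{frac}}]$ with unique minimizer $\delta(\omega)$ there.)

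Second, I would prove $z_{\mathrm{frac}}^{\mathrm{sup}}<\infty$ by contradiction. Suppose there is a sequence $\omega_k$ with $z_{\mathrm{frac}}(\omega_k)\to\infty$. On the interval $[\delta(\omega_k),z_{\mathrm{frac}}(\omega_k)]$ the potential $J(\omega_k,\cdot)$ is convex, so it lies above its tangent line at $\delta(\omega_k)$; since $J'(\omega_k,\delta(\omega_k))=0$, this tangent is the constant $J(\omega_k,\delta(\omega_k))$, giving $J(\omega_k,z)\ge J(\omega_k,\delta(\omega_k))$ for all $z$ in that interval. The key is to turn this into a lower bound that \emph{grows} with $z$. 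For that I would use (A4): since $J''(\omega,z)\ge c$ on $\delta(\omega)+(-c,c)$ and $\delta(\omega)>c$, a second-order Taylor expansion gives $J'(\omega,\delta(\omega)+c)\ge c\cdot c=c^2$; then convexity on $[\delta(\omega),z_{\mathrm{frac}}(\omega)]$ forces $J'(\omega,z)\ge c^2$ for all $z\in[\delta(\omega)+c,z_{\mathrm{frac}}(\omega)]$, and integrating yields $J(\omega_k,z_{\mathrm{frac}}(\omega_k))\ge J(\omega_k,\delta(\omega_k)+c)+c^2\big(z_{\mathrm{frac}}(\omega_k)-\delta(\omega_k)-c\big)\to\infty$ as $k\to\infty$ (using that $J(\omega_k,\delta(\omega_k)+c)$ and $\delta(\omega_k)$ are uniformly bounded by (A2) and (A4)). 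But $z_{\mathrm{frac}}(\omega_k)\ge 1$ for large $k$, so (A2) gives $J(\omega_k,z_{\mathrm{frac}}(\omega_k))\le\psi^+(z_{\mathrm{frac}}(\omega_k))$, and since $\psi^+$ is continuous on $(0,\infty)$ with $\psi^+(z)\to 0$ as $z\to\infty$, the right-hand side is bounded. This contradicts the divergence just obtained.

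I expect the second part to be the main obstacle, and specifically the step of converting local curvature information at the ground state into a \emph{uniform-in-$\omega$, linearly growing} lower bound for $J$ on the entire convex branch. The subtlety is that (A4) only controls $J''$ in a fixed-size window around $\delta(\omega)$, so one cannot integrate a uniform curvature bound all the way to $z_{\mathrm{frac}}(\omega)$; instead one leverages convexity (from (A3)) to propagate the positive slope $J'(\omega,\delta(\omega)+c)\ge c^2$ to the right, which is what makes the argument go through. Everything else — the monotonicity claim, the Taylor estimates, and invoking $\psi^+\to 0$ — is routine once this is set up.
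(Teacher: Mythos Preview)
Your proposal is correct and follows essentially the same route as the paper. Both arguments hinge on the identical mechanism: use (A4) to obtain a uniformly positive slope $J'(\omega,\delta(\omega)+\eta)\ge c\eta$ at a point a fixed distance to the right of the ground state, propagate that slope along the convex branch via (A3), and confront the resulting linear lower bound on $J(\omega,z_{\mathrm{frac}}(\omega))$ with the upper bound coming from (A2). The only cosmetic difference is that the paper runs the argument directly---rearranging the tangent-line inequality $0\ge J(z_{\mathrm{frac}})\ge J(\delta+\eta)+J'(\delta+\eta)(z_{\mathrm{frac}}-\delta-\eta)$ to extract an explicit, $\omega$-independent upper bound $z_{\mathrm{frac}}\le \tfrac{1}{c}+\tfrac{d}{c\eta}$---whereas you phrase it as a contradiction by letting $z_{\mathrm{frac}}(\omega_k)\to\infty$.
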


\begin{proof}[Proof of Lemma~\ref{prop:derivatives}]
  For convenience we drop the dependence on $\omega$ in our notation and simply write $J(z)$, $\delta$ and $z_{\mathrm{frac}}$ instead of $J(\omega,z)$, $\delta(\omega)$ and $z_{\mathrm{frac}}(\omega)$, respectively. We first prove that $J$ is increasing on $[\delta,\infty)$. On $[z_{\mathrm{frac}},\infty)$ this directly follows from (A3). On $[\delta,z_{\mathrm{frac}})$ this follows from the convexity of $J$ on $(0,z_{\rm frac})$ and the fact that $\delta$ minimizes $J$.
  Next, we prove  \eqref{def:zfracsup}. We first note that (A2) and (A3) imply that
  \begin{equation}\label{neu:1}
    \forall z\in(\delta,\infty)\,:\,J(\delta)\leq J(z)\leq 0.
  \end{equation}
  Moreover, (A4) implies that $z_{\rm frac}\geq\delta+c$. Thus, for all $\eta\in(0,c)$ we obtain
  \begin{align}\label{ineq:prop:derivatives}
    0\geq& J(z_{\rm frac})=J(\delta+\eta)+\int_{\delta+\eta}^{z_{\rm frac}}J'(t)\,\mathrm{d}t\geq J(\delta+\eta)+J'(\delta+\eta)(z_{\rm frac}-(\delta+\eta)), 
\end{align}
where the second inequality holds, since $J'$ is increasing on $(\delta+\eta, z_{\rm frac})$ thanks to (A3). (A4) yields
$$
J'(\delta+\eta)=J'(\delta+\eta)-J'(\delta)=\int_\delta^{\delta+\eta}J''(s)\,ds\geq c\eta.
$$
Thus, by rearranging terms in \eqref{ineq:prop:derivatives} and appealing to \eqref{neu:1} and the previous estimate we get
\begin{equation}\label{neu:2}
  z_{\rm frac}\leq \delta+\eta -\frac{J(\delta+\eta)}{J'(\delta+\eta)}\leq \delta+\eta -\frac{J(\delta)}{c\eta}.
\end{equation}
It remains to bound $\delta=\delta(\omega)$ and $-J(\delta)=-J(\omega,\delta(\omega))$ by a constant that is independent of  $\omega$. From (A4) and (A2) we get
\begin{equation}\label{neu:3}
\delta\in(c,\frac1c)\text{ and }
-J(\delta)\leq \max_{z\in [c,\frac1c+\eta]}\max\{-\psi^-(z),\,|\psi^+(z)|\}=:d<\infty,
\end{equation}
and thus, \eqref{neu:2} yields $z_{\rm frac}\leq \frac1c+\frac{d}{c\eta}$.
\end{proof}

\subsection{Proof of Theorem~\ref{thm:deltaverschieden}}
\label{sec:proofthmdeltaverschieden}

\begin{proof}[Proof of Theorem~\ref{thm:deltaverschieden}]
 Note that $\omega\mapsto\delta(\omega)$ is (as a minimizer of a measurable function) measurable. Moreover, by \eqref{neu:3}  $\delta$ is a non-negative and bounded and thus an $L^1$-random variable.
  Thus the ergodic theorem yields
\begin{equation}\label{lim:ergodic:T1}
\lim_{n\to\infty}\frac1n\sum_{i=1}^n\delta(\tau_i\omega)=\mathbb E[\delta],\quad \lim_{n\to\infty}\frac1n\sum_{i=1}^nJ(\tau_i\omega,\delta(\tau_i\omega))=\mathbb E[J(\delta)]\quad\mbox{}
\end{equation}	
for $\mathbb P$-a.e.\ $\omega\in\Omega$. For the rest of the proof we consider $\omega\in\Omega$ such that \eqref{lim:ergodic:T1} is valid and drop the dependence on $\omega$. In particular, we set $\delta_i:=\delta(\tau_i\omega)$, $z_{\mathrm{frac}}^i:=z_{\mathrm{frac}}(\tau_i\omega)$ and $J_i(z):=J(\tau_i\omega,z)$.

\step 1 We show that $\overline A:=\limsup_{n\to\infty}\ell_n^*\leq \mathbb{E}[\delta]$.

W.l.o.g.\ we suppose $\overline A=\lim_{n\to\infty}\ell_n^*$ and prove $\overline A\leq \mathbb E[\delta]$ by contradiction. Assume that there exists $\epsilon\in(0,c)$ such that $\overline A>\mathbb{E}[\delta]+3\epsilon$. By \eqref{lim:ergodic:T1}, we find $\overline N\in\mathbb N$ such that
	\begin{align}
	\label{abschaetzung}
	 \ell_n^*>\frac1n\sum_{i=1}^{n}\delta_i+2\epsilon=:k_n\quad\text{for}\ n>\overline{N}.
	\end{align}
In view of Lemma~\ref{L:reformellstar} there exists a sequence $(\bar z_n)_n$  satisfying for $n\geq \overline N$
\begin{equation}\label{eq:T1bs1zn}
 \frac1n\sum_{i=1}^n\bar z_n^{i}=k_n,\quad \frac1n\sum_{i=1}^nJ_i(\bar z_n^i)=M_n(k_n),\quad \bar z_n^i\leq z_{\rm frac}^i\quad\forall i\in\{1,\dots,n\}.
\end{equation}
We claim that
\begin{align}
\limsup_{n\to\infty}M_n(k_n)\leq& \mathbb{E}[J(\delta)], \label{pf:T1:Step1a}\\
\liminf_{n\to\infty}M_n(k_n)\geq&\mathbb{E}[J(\delta)]+c_\epsilon,\label{pf:T1:Step1b}
\end{align}
for some $c_\epsilon>0$. Clearly, \eqref{pf:T1:Step1a} and \eqref{pf:T1:Step1b} contradict \eqref{eq:T1bs1zn}

\substep{1.1} Proof of \eqref{pf:T1:Step1a}. Let $z_n\in\mathbb R^n$ be given by $z_n^i:=\delta_i$ for $i\geq2$ and $z_n^1:=\delta_1+2n\epsilon$. Since $\frac1n\sum_{i=1}^{n}z_n^i=k_n$, we have 
	\begin{align*}
	M_n(k_n)&\leq\frac1n\sum_{i=2}^{n}J_i(\delta_i)+\frac1nJ_1\left(\delta_1+2n\epsilon \right)=\frac1n\sum_{i=1}^{n}J_i(\delta_i)+\frac1n(J_1\left(\delta_1+2n\epsilon \right)-J_1(\delta_1)).
	\end{align*}
	Hence, \eqref{pf:T1:Step1a} follows by (A2) and \eqref{lim:ergodic:T1}.
	
	\substep{1.2} Proof of \eqref{pf:T1:Step1b}. Let $\bar z_n$ be as in \eqref{eq:T1bs1zn} and set
	\begin{align*}
	I_n:=\left\{i\in\{1,...,n\}:\bar z_n^i>\delta_i+\epsilon\right\}.
	\end{align*}
	Obviously, it holds $0\leq |I_n|/n\leq 1$ and we claim 
	\begin{equation}\label{claim:Lambda}
	\frac{|I_n|}n\geq \frac{\epsilon}{z_{\rm frac}^{\mathrm{sup}}}>0\qquad\mbox{for all $n\in\mathbb N$},
	\end{equation}
	where $z_{\mathrm{frac}}^{\mathrm{sup}}\in(0,\infty)$ is as in Lemma~\ref{prop:derivatives}. Indeed, 
	\begin{align*}
	\frac1n\sum_{i=1}^n\delta_i+2\epsilon=k_n=\dfrac{1}{n}\sum_{i=1}^{n}\bar z_n^i=\dfrac{1}{n}\sum_{i\in I_n}\bar z_n^i+\dfrac{1}{n}\sum_{i\notin I_n}\bar z_n^i\stackrel{\eqref{eq:T1bs1zn}}{\leq} \dfrac{|I_n|}{n}z_{\mathrm{frac}}^{{\rm sup}}+\dfrac{1}{n}\sum_{i=1}^n(\delta_i+\epsilon) 
	\end{align*}
implies \eqref{claim:Lambda}. Finally, using the monotonicity of $J_i$ on $(\delta_i,\infty)$ (see Lemma~\ref{prop:derivatives}) and (A4), we obtain
\begin{align*}
 \frac1n\sum_{i=1}^nJ_i(\bar z_n^i)&= \frac1n\sum_{i\in I_n}J_i(\bar z_n^i)+\frac1n\sum_{i\notin I_n}J_i(\bar z_n^i)\geq \frac1n\sum_{i\in I_n}J_i(\delta_i+\epsilon)+\frac1n\sum_{i\notin I_n}J_i(\delta_i)\\
&\geq\frac1n\sum_{i\in I_n}\left(J_i(\delta_i)+\tfrac12c\epsilon^2\right)+\frac1n\sum_{i\notin I_n}J_i(\delta_i)=\frac1n\sum_{i=1}^{n}J_i(\delta_i)+\frac{|I_n|}{n}\frac12c\epsilon^2,
\end{align*}
where $c>0$ is as in (A4). Sending $n\to\infty$, we obtain with help of \eqref{lim:ergodic:T1} and \eqref{claim:Lambda} the claim \eqref{pf:T1:Step1b}.

\step 2 We claim $\underline A:=\liminf_{n\to\infty} \ell_n^*\geq\mathbb{E}[\delta]$.
	
For all $\epsilon>0$, we show
\begin{equation}\label{eq:t1s2:key}
\ell_n^*\geq \frac1n\sum_{i=1}^n\delta_i-\epsilon=:k_n \qquad\forall n\in\mathbb N,
\end{equation}
which in combination with \eqref{lim:ergodic:T1} implies $\underline A:=\liminf_{n\to\infty} \ell_n^*\geq\mathbb{E}[\delta]$ by the arbitrariness of $\epsilon>0$. 

Let $\bar z_n$ be such that
$$
\frac1n\sum_{i=1}^n\bar z_n^{i}=k_n,\quad \frac1n\sum_{i=1}^nJ_i(\bar z_n^i)=M_n(k_n).
$$
We show $\bar z_n^i\leq \delta_i< z_{\rm frac}^i$ $\forall i\in\{1,\dots,n\}$, which obviously implies \eqref{eq:t1s2:key}. Indeed, the optimality condition for $\bar z_n$ implies that there exists a Lagrange-Multiplier $\Lambda\in\mathbb R$ such that $\Lambda =J_i'(\bar z_n^i)$ for all $i\in\{1,\dots,n\}$. Since $\frac1n\sum_{i=1}^n(\bar z_n^i-\delta_i)\leq -\epsilon$ there exists $\hat i\in\{1,\dots,n\}$ such that $\bar z_n^{\hat i}\in(0,\delta_i)$ and thus $J_{\hat i}'(\bar z_n^{\hat i})<0$. Hence $J_i'(\bar z_n^i)<0$ for all $i\in\{1,\dots,n\}$. Since $J'_i\geq 0$ on $(\delta_i,\infty)$ by Lemma~\ref{prop:derivatives}, we conclude that $\bar z^i_n\leq \delta_i\leq z^i_{\rm frac}$ and thus $\ell^*_n\geq k_n$ by Lemma~\ref{L:reformellstar}.
\end{proof}

\subsection{Proof of Theorem~\ref{thm:deltagleich}}
\label{sec:proofthmdeltagleich}

We begin with a preliminary structure result for minimizers of the minimum problem in the definition of $M_n(\omega,1+n^{-\frac12}D)$ for some $D>0$ see \eqref{def:Mnell}.

\begin{proposition}\label{prop:schrankeableitung}
  Let Assumption~\ref{Ass:stochasticLJ} be satisfied and assume $\delta(\omega)=1$ for $\mathbb P$-a.e.\ $\omega\in\Omega$. Fix $D>0$. There exist $\bar N\in\mathbb N$ and a sequence $(N_n)$ satisfying $N_n\to\infty$ such that the following statements hold true for $\mathbb P$-a.e.\ $\omega\in\Omega$ and $n\geq \bar N$:

  Let $\bar z_n\in\mathbb R^n$ be such that
\begin{equation}\label{eq:znprop}
\frac1n\sum_{i=1}^n\bar z_n^{i}=1+n^{-\frac12}D\quad\mbox{and}\quad \frac1n\sum_{i=1}^nJ(\tau_i\omega,\bar z_n^i)=M_n(\omega,1+n^{-\frac12}D).
\end{equation}
Then, it holds
\begin{equation}\label{schrankezn}
 \bar z_n^i\in[1,1+c^{-2}n^{-\frac12}D]\cup[N_n,\infty)\quad\text{for all}\ i\in\{1,...,n\},
\end{equation}
where $c>0$ is as in (A4).
	
\end{proposition}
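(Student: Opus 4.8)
The plan is to exploit the Euler--Lagrange structure of the minimum problem defining $M_n(\omega,1+n^{-\frac12}D)$ together with an a priori energy budget coming from a simple competitor. Since $J(\tau_i\omega,z)\to\infty$ as $z\to 0^+$ and the minimum is finite, any minimizer $\bar z_n$ lies in the interior $(0,\infty)^n$, so there is a Lagrange multiplier $\Lambda_n=\Lambda_n(\omega)\in\mathbb R$ with $J'(\tau_i\omega,\bar z_n^i)=\Lambda_n$ for every $i$. First I would show $\Lambda_n\ge 0$: if $\Lambda_n<0$, then since $J(\tau_i\omega,\cdot)$ is increasing on $[\delta(\tau_i\omega),\infty)=[1,\infty)$ by Lemma~\ref{prop:derivatives}, every $\bar z_n^i$ would lie in $(0,1)$, contradicting $\frac1n\sum_i\bar z_n^i=1+n^{-\frac12}D>1$. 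In particular $\bar z_n^i\ge 1$ for all $i$, and hence all the quantities $e_i:=J(\tau_i\omega,\bar z_n^i)-J(\tau_i\omega,1)$ are nonnegative, because $\delta(\tau_i\omega)=1$ minimizes $J(\tau_i\omega,\cdot)$.

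Next I would set up the energy budget. Testing the minimum problem with the uniform configuration $z^i\equiv 1+n^{-\frac12}D$ and Taylor expanding at the ground state (using $J'(\tau_i\omega,1)=0$ and the bound $J''\le c^{-1}$ from (A4), valid once $n^{-\frac12}D<c$) gives $M_n(\omega,1+n^{-\frac12}D)\le\frac1n\sum_iJ(\tau_i\omega,1)+\frac{D^2}{2cn}$, whence $\sum_i e_i\le\frac{D^2}{2c}$. For any bond with $\bar z_n^i>1+c$, monotonicity of $J(\tau_i\omega,\cdot)$ past $1$ together with the convexity estimate $J''\ge c$ on $(1-c,1+c)$ yields $e_i\ge J(\tau_i\omega,1+c)-J(\tau_i\omega,1)\ge\frac{c^3}{2}$, so the set $F_n:=\{i:\bar z_n^i>1+c\}$ satisfies $|F_n|\le D^2c^{-4}$, a bound independent of $n$ and $\omega$. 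For $i\notin F_n$ the value $\bar z_n^i$ lies in the strictly convex window $[1,1+c]$, where (A4) gives $\Lambda_n=\int_1^{\bar z_n^i}J''(\tau_i\omega,s)\,\diff s\in[c(\bar z_n^i-1),\,c^{-1}(\bar z_n^i-1)]$. Summing $\bar z_n^i-1\ge c\Lambda_n$ over $i\notin F_n$, using $\sum_i(\bar z_n^i-1)=n^{\frac12}D$, $\bar z_n^i-1\ge 0$ for $i\in F_n$, and the bound on $|F_n|$, I obtain the crude estimate $\Lambda_n\le 2c^{-1}n^{-\frac12}D$ for $n$ large, which will be sharpened at the very end.

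The heart of the matter is to rule out ``moderately stretched'' bonds. Fix any sequence $N_n\to\infty$ with $N_n=o(\sqrt n)$, e.g.\ $N_n=n^{1/4}$, and suppose some bond $k$ has $\bar z_n^k\in(1+c,N_n)$. Let $G_n':=\{j:\bar z_n^j\le 1+\tfrac c2\}$; from the budget the complement of $G_n'$ has $O(1)$ elements, so $|G_n'|\ge n-O(1)$. I consider the competitor obtained from $\bar z_n$ by resetting $z^k:=1$ and spreading the released length $\bar z_n^k-1<N_n$ equally over the bonds of $G_n'$; this preserves the constraint and keeps all perturbed bonds inside $(1,1+c)$ for $n$ large, where $J''\le c^{-1}$. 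Using $J'(\tau_j\omega,\bar z_n^j)=\Lambda_n$ for all $j$ and a second-order expansion, the resulting change of $E_n$ is at most $\frac1n\bigl(-\tfrac{c^3}{2}+\Lambda_n(\bar z_n^k-1)+\tfrac{(\bar z_n^k-1)^2}{2c\,|G_n'|}\bigr)$; since $\Lambda_n=O(n^{-\frac12})$, $\bar z_n^k-1<N_n=n^{1/4}$ and $|G_n'|\ge n-O(1)$, the last two terms are $o(1)$, so the competitor has strictly smaller energy — contradicting minimality. Hence $\bar z_n^k\notin(1+c,N_n)$ for every $k$, which with the definition of $F_n$ means $\bar z_n^i\ge N_n$ for every $i\in F_n$. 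Knowing $\bar z_n^i\ge N_n$ on $F_n$ now improves the estimate of the previous paragraph to $\Lambda_n\le c^{-1}n^{-\frac12}D$ for $n$ large (the extra length carried by the few $F_n$-bonds buys back the slack), whence $\bar z_n^i-1\le\Lambda_n/c\le c^{-2}n^{-\frac12}D$ for all $i\notin F_n$. Choosing $\bar N$ so large that all the ``for $n$ large'' provisos hold simultaneously completes the argument.

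I expect the third paragraph — excluding the moderately stretched bonds — to be the main obstacle. The delicate points are: (i) the released length must be redistributed only over bonds that are already very close to the ground state, so that the perturbed configuration stays inside the (A4)-window where $J''$ is uniformly bounded (the $O(1)$ bonds of $F_n$ may sit far outside that window and need not have controlled second derivative); and (ii) the gain $-c^3/(2n)$ is only of order $1/n$, so the redistribution cost — whose leading term is $\Lambda_n(\bar z_n^k-1)/n$ — must be shown to be of strictly lower order, which is precisely what forces $N_n=o(\sqrt n)$ and the preliminary $O(n^{-\frac12})$ bound on $\Lambda_n$. The remaining ingredients (existence and nonnegativity of $\Lambda_n$, the budget inequality, and the convexity estimates near the ground state) are routine consequences of Assumption~\ref{Ass:stochasticLJ} and Lemma~\ref{prop:derivatives}.
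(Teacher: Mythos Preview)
Your proof is correct and follows the same overall strategy as the paper: exploit the Lagrange multiplier $\Lambda_n=J_i'(\bar z_n^i)$, pin down $\Lambda_n\in[0,c^{-1}n^{-1/2}D]$, bound the number of ``bad'' bonds uniformly in $n$, and then use a reset--and--redistribute competitor to show that a bad bond cannot remain bounded. The differences are in implementation rather than in idea. First, the paper obtains the sharp bound $\Lambda_n\le c^{-1}n^{-1/2}D$ in one line by averaging: some $\bar z_n^{i_2}\le 1+n^{-1/2}D$ lies in the (A4)-window for $n$ large, so $\Lambda_n=J'_{i_2}(\bar z_n^{i_2})\le c^{-1}n^{-1/2}D$. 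Your route via the energy budget, a crude bound, and the later ``buys back the slack'' refinement works, but this pigeonhole step is shorter and makes the final constant $c^{-2}$ immediate. Second, you bound $|F_n|$ directly from the budget $\sum_i e_i\le D^2/(2c)$ and the individual cost $e_i\ge c^3/2$; the paper instead constructs a competitor that merges all weak bonds into one and compares energies. Your argument here is actually the more economical of the two. Third, the paper defines the bad set through the (random) inflection point, $I_n^w=\{i:\bar z_n^i\ge z_{\rm frac}^i\}$, whereas you cut at the deterministic level $1+c$; since $z_{\rm frac}^i\ge 1+c$ and, for $n$ large, $1+c^{-2}n^{-1/2}D<1+c<N_n$, the two dichotomies coincide. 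Finally, you fix $N_n=n^{1/4}$ (any $o(\sqrt n)$ sequence would do, as you note) and exclude $(1+c,N_n)$ directly; the paper instead shows that for every fixed $A$ the interval $(z_{\rm frac}^i,A]$ is eventually forbidden and then extracts a diagonal $N_n\to\infty$. Both arguments rest on the same competitor computation, and your explicit choice makes the $\bar N$ in the statement a little more tangible.
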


\begin{proof}[Proof of Proposition~\ref{prop:schrankeableitung}]

We consider $\omega\in\Omega$ such that $\delta(\tau_i\omega)=1$ $\forall i\in\mathbb N$ and drop the dependence on $\omega$. Moreover, we use the shorthand notation $z_{\mathrm{frac}}^i:=z_{\mathrm{frac}}(\tau_i\omega)$ and $J_i(z):=J(\tau_i\omega,z)$. 

\step 1 We show that
\begin{align}\label{ersteAbleitungJ}
 0\leq J'(\bar{z}_{n}^i)\leq \frac1cDn^{-\frac{1}{2}}\quad\text{for all}\ i\in\{1,...,n\}
\end{align}
where $c>0$ is as in (A4).

\smallskip

By the optimality condition for $\bar z_n$ there exists a Lagrange-Multiplier $\Lambda\in\mathbb R$ such that $\Lambda=J_i'(\bar z_n^i)$ for all $i\in\{1,\dots,n\}$. Since $\frac1n\sum_{i=1}^n\bar z_n^i=1+n^{-\frac12}D$, there exists $i_1 \in\{1,\ldots,n\}$ such that $\bar{z}_n^{i_1}\geq 1+n^{-\frac12}D>1$. Lemma~\ref{prop:derivatives} and the assumption $\delta(\tau_i\omega)=1$ imply that $J_i$ is increasing on $(1,\infty)$ and thus we have $\Lambda\geq0$. Moreover, there exists $i_2\in\{1,\ldots,n\}$ such that $\bar{z}_n^{i_2}\leq 1+n^{-\frac12}D$. For $n$ sufficiently large such that $n^{-\frac12}D<c$, where $c>0$ as in (A4), we have (using that $J'_i(1)=0$)
\begin{align*}
0&\leq \Lambda=J'_{i_2}(\bar{z}_n^{i_2})=\int_{1}^{\bar z_n^{i_2}}J_{i_2}''(t)\,dt\stackrel{(A4)}{\leq} \frac1cn^{-\frac12}D.
\end{align*}
Since $\Lambda=J_i'(\bar z_n^i)$ for all $i\in\{1,\dots,n\}$ the claim \eqref{ersteAbleitungJ} follows.	
	
\step 2 Argument for \eqref{schrankezn}.

We firstly observe that \eqref{ersteAbleitungJ} implies $1\leq \bar z_n^i$ for all $i\in\{1,\dots,n\}$ (recall $J_i'(z)<0$ on $(0,1)$). The remaining estimates of \eqref{schrankezn} are proven in three steps.  
	
\substep{2.1} We claim that for $n$ sufficiently large, $\bar z_n^i\leq z_{\rm frac}^i$ implies $\bar z_n^i\leq 1+c^{-2}n^{-\frac12}D$, where $c>0$ is as in (A4). Indeed, using $J_i''(s)>0$ on $(0,z_{\rm frac}^i)$ and (A4), we deduce from $\bar z_n^i\leq z_{\rm frac}^i$ and $n$ sufficiently large that 
\begin{align*}
 c^{-1}Dn^{-\frac{1}{2}}\stackrel{\eqref{ersteAbleitungJ}}{\geq} J_i'\left(\bar{z}_n^i \right)=\int_1^{\bar z_n^i}J_i''(t)\,dt\stackrel{(A4)}{\geq} c \min\{\bar{z}_n^i-1,c\} .
\end{align*}
From the above inequality we deduce that $\overline z_n^i-1\geq c$ implies $n\leq D^2/c^6$. Hence, $\overline z_n^i-1<c$ and thus $1\leq \overline z_n^i\leq 1+c^{-2}Dn^{-\frac12}$ for $n>D^2/c^6$.

\substep{2.2} There exists $M<\infty$, depending only on $\psi^-(1)$ from (A2) and $c>0$ from (A4), such that 
\begin{align}\label{def:Inw}
 \sup_{n\in\mathbb N}|I_n^w|\leq M\qquad\mbox{where}\qquad I_n^w:=\left\{ i\in\{1,...,n\}\, :\, \bar{z}_n^i\geq z_{\mathrm{frac}}^i \right\}.
\end{align}
Suppose $|I_n^\omega|\geq2$ and consider some $i_n\in I_n^\omega$. Define
\begin{align}\label{competitior}
	\hat{z}_n^i:=\begin{cases}
	\bar z_n^i\quad&\mbox{if $i\notin I_n^\omega$},\\
	1&\mbox{if $i\in I_n^\omega\setminus \{i_n\}$},\\
	1+\sum_{i\in I_n^\omega}(\bar z_n^i-1)&\mbox{if $i=i_n$}.
	\end{cases}
	\end{align}
	By construction, we have $\sum_{i=1}^n\bar z_n^i=\sum_{i=1}^n\hat z_n^i$ and thus by \eqref{eq:znprop}
	\begin{align}\label{est:minifracM}
	0\geq \sum_{i=1}^n \left(J_i(\bar z_n^i)-J_i(\hat z_n^i)\right)=\sum_{i\in I_n^\omega\setminus\{i_n\}}(J_i(\bar z_n^i)-J_i(1))+J_{i_n}(\bar z_n^{i_n})-J_{i_n}(\hat z_n^{i_n}).
	\end{align}
	By the monotonicity of $J_i$ on $(1,\infty)$, (A3) and (A4), we find $\eta=\eta(c)>0$, where $c>0$ is as in (A4), such that
	\begin{equation}\label{est:minifrac}
	J_i(\bar z_n^i)-J_i(1)\geq J_i(z_{\rm frac}^i)-J_i(1)\geq \eta\qquad\forall i\in I_n^w.
	\end{equation} 
	Moreover, using $\hat z_n^{i_n}\geq1$ and thus $J_{i_n}(\hat z_n^{i_n})\leq 0$ (wich follows from the monotonicity of $J_i$ on $(1,\infty)$ and (A2)) we obtain 
	\begin{equation}\label{est:minifracM1}
	J_{i_n}(\bar z_n^i)-J_{i_n}(\hat z_n^{i_n})\geq J_{i_n}(1)\stackrel{(A2)}\geq \psi^-(1).
	\end{equation}
	Combining \eqref{est:minifracM}--\eqref{est:minifracM1}, we deduce the uniform bound $|I_n^\omega|\leq1-\eta^{-1}\psi^-(1)$.
		
	\substep{2.3} We show that there exists $(N_n)$ satisfying $N_n\to\infty$ as $n\to\infty$ such that $\bar z_n^i\geq N_n$ for all $i\in I_n^w$, where $I_n^w$ is defined in \eqref{def:Inw}.
	
	We argue by contradiction and assume that there exists $A\in[1,\infty)$ and an index $\hat i\in I_n^w$ such that $\bar z_n^{\hat i}\leq A$. For $n$ sufficiently large, we show that this contradicts \eqref{eq:znprop}. Define
	\begin{equation}
	\tilde z_n^i:=\begin{cases}1&\mbox{if $i=\hat i$},\\ \bar z_n^i+(n-|I_n^w|)^{-1}(\bar z_n^{\hat i}-1)&\mbox{if $i\notin I_n^\omega$},\\ \bar z_n^i&\mbox{if $i\in I_n^w\setminus \{\hat i\}$}.\end{cases}
	\end{equation}
        By construction, we have $\sum_{i=1}^n\tilde z_n^i=\sum_{i=1}^n\bar z_n^i$. Since $\bar z_n$ is a minimizer (see\eqref{eq:znprop}),
\begin{align*}
 0\geq\sum_{i=1}^n(J_i(\bar z_n^i)-J_i(\tilde z_n^i))=J_{\hat i}(\bar z_n^{\hat i})-J_{\hat i}(1)+\sum_{i\notin I_n^\omega}(J_i(\bar z_n^i)-J_i(\tilde z_n^i)).
\end{align*}
By \eqref{est:minifrac} we have $J_{\hat i}(\bar z_n^{\hat i})-J_{\hat i}(1)\geq\eta(c)>0$. To obtain a contradiction, it suffices to show that the second term on the right-hand side vanishes as $n$ tends to infinity. This can be seen as follows: On the one hand, we have  $\bar z_n^i\in[1,1+c^{-2}n^{-\frac12}D]$ for all $i\notin I_n^w$ by Substep~2.1, and on the other hand, we have $(n-|I_n^w|)^{-1}(\bar z_n^{\hat i}-1)\leq (n-M)^{-1}(A-1)$ thanks to $|I_n^w|\leq M$. Hence, $\bar z_n^i,\tilde z_n^i\in[1,1+\frac{c}2]$ for $n$ sufficiently large (depending only on $c$, $D$, $M$ and $A$). Now, a quadratic Taylor expansion of $J_i$ at $\bar z^i_n$ yields (using $|J''(z)|\leq c^{-1}$ for $z\in[1,1+c)$, see (A4))
\begin{align*}
  \sum_{\footnotesize\substack{i=1 \\ i\notin I_n^\omega}}^n|J_i(\bar z_n^i)-J_i(\tilde z_n^i)|\leq&\sum_{i=1}^n\left(|J_i'(\bar z_n^i)|(n-M)^{-1}(A-1)+c^{-1}(n-M)^{-2}(A-1)^2\right)\\
  \stackrel{\eqref{ersteAbleitungJ}}{\leq}&n(n-M)^{-1}c^{-1}(A-1)\Big(n^{-\frac12}D+(A-1)(n-M)^{-1}\Big)\leq Cn^{-\frac12},
\end{align*}
where $C<\infty$ depends only on $A,c,D$ and $M$.
\end{proof}

\begin{proof}[Proof of Theorem~\ref{thm:deltagleich}]

By the ergodic theorem, it holds
\begin{equation}\label{lim:ergodic:t2}
\lim_{n\to\infty}\frac1n\sum_{i=1}^n{J''(\tau_i\omega,1)^{-1}}=\mathbb E[J''(1)^{-1}],\qquad\lim_{n\to\infty}\beta_n(\omega)=\beta\end{equation}
for $\mathbb P$-a.e.\ $\omega\in\Omega$, where $\beta$ is defined in \eqref{def:alphabeta} and
\begin{equation}\label{def:beta}
\beta_n(\omega):=\min\{-J(\tau_i\omega,1)\,:\,i\in\{1,\dots,n\}\}.
\end{equation}
In Step~3 below we provide an argument for the limit $\beta_n\to\beta$.

In Step~1 and Step~2, we consider $\omega\in\Omega$ such that \eqref{lim:ergodic:t2} and the conclusion of Proposition~\ref{prop:schrankeableitung} are valid. Moreover, we drop the dependence on $\omega$ and use the shorthand notation $z_{\mathrm{frac}}^i:=z_{\mathrm{frac}}(\tau_i\omega)$ and $J_i(z):=J(\tau_i\omega,z)$. 
	
\step 1 We prove $\overline A:=\limsup_{n\to\infty}\gamma_n^*\leq\sqrt{\dfrac{\beta}{\underline{\alpha}}}$ by contradiction: Assume that  there exists $\epsilon>0$ and $\overline{N}\in\mathbb{N}$ such that 
\begin{align}\label{fN}
 \ell_n^*>1+n^{-\frac12}\sqrt{\frac{\beta}{\underline{\alpha}}}(1+\epsilon)=:k_n\quad\text{for}\  n>\overline{N}.
\end{align}
In view of Lemma~\ref{L:reformellstar} there exists $(\bar z_n)_n$ satisfying 
\begin{equation}\label{eq:Tbs1zn}
 \frac1n\sum_{i=1}^n\bar z_n^{i}=k_n,\quad \frac1n\sum_{i=1}^nJ_i(\bar z_n^i)=M_n(k_n)\quad \bar z_n^i\leq z_{\rm frac}^i\quad\forall i\in\{1,\dots,n\}.
\end{equation}
We show
\begin{align}
\limsup_{n\to\infty}n\biggl(M_n(k_n)-\frac1n\sum_{i=1}^nJ_i(1)\biggr)\leq& \beta\label{pf:T2:Step1a},\\
\liminf_{n\to\infty}n\biggl(\frac1n\sum_{i=1}^n J_i(\bar z_n^i)-\frac1n\sum_{i=1}^nJ_i(1)\biggr)\geq&\beta(1+\epsilon)^2.\label{pf:T2:Step1b}
\end{align}
Clearly, \eqref{pf:T2:Step1a} and \eqref{pf:T2:Step1b} contradict \eqref{eq:Tbs1zn} for $n$ sufficiently large.

\substep{1.1} Argument for \eqref{pf:T2:Step1b}.

We claim that there exists $K<\infty$ such that for all $n$ sufficiently large 
\begin{align}\label{melgl0}
 n\biggl(\frac1n\sum_{i=1}^nJ_i(\bar z_n^i)-\frac1n\sum_{i=1}^nJ_i(1)\biggr)&\geq \biggl(\frac1{n}\sum_{i=1}^n\big(\tfrac12J_i''(1)\big)^{-1}\biggr)^{-1}\frac{\beta}{\underline \alpha}(1+\epsilon)^2-\frac{K}{\sqrt{n}},
\end{align}
where $\bar \alpha$ and $\beta$ are defined in \eqref{def:alphabeta}. Note that \eqref{lim:ergodic:t2} and \eqref{melgl0} imply \eqref{pf:T2:Step1b}.

We prove \eqref{melgl0}. By \eqref{fN}, \eqref{eq:Tbs1zn}, and Proposition~\ref{prop:schrankeableitung} (applied with $D=\sqrt{\frac{\beta}{\underline\alpha}}(1+\epsilon)^2$), we get
\begin{equation}\label{neu:6}
  1\leq z_n^i\leq 1+n^{-\frac12}C
\end{equation}
for some $C<\infty$ independent of $n$. Hence, a Taylor expansion yields
\begin{align}\label{melgl}
\sum_{i=1}^{n}J_i(\bar z_n^i)=\sum_{i=1}^nJ_i(1)+\frac1{2}\sum_{i=1}^{n}J_i''(1)\left({\bar z}_n^i-1\right)^2+\frac1{6}\sum_{i=1}^{n}J_i'''(\xi_n^i)\left(\bar z_n^i-1\right)^3,
\end{align}
where $\xi_n^i\in\left[1,\bar z_n^i\right]$.
To estimate the second term on the right-hand side, note that Cauchy-Schwarz' inequality yields 
\begin{equation*}
\Big(\sum_{i=1}^n(\bar z_n^i-1)\Big)^2\leq\Big(\frac12\sum_{i=1}^nJ_i''(1)(\bar z_n^i-1)^2\Big)\Big(\sum_{i=1}^n(\tfrac12J_i''(1))^{-1}\Big).
\end{equation*}
Combined with the identity  $\sum_{i=1}^{n}({\bar z}_n^i-1)=n(k_n-1)=\sqrt{n}\sqrt{\frac{\beta}{\underline \alpha}}(1+\epsilon)$  we get
\begin{align}\label{1dmin}
\biggl(\frac1{n}\sum_{i=1}^n\big(\tfrac12J_i''(1)\big)^{-1}\biggr)^{-1}\frac{\beta}{\underline \alpha}(1+\epsilon)^2\leq \frac1{2}\sum_{i=1}^{n}J_i''(1)\left(\bar z_n^i-1\right)^2.
\end{align}

Moreover, \eqref{neu:6} and (A4) imply for $n$ sufficiently large
\begin{equation}\label{eq:t2:s1:final}
\frac1{6}\sum_{i=1}^{n}J_i'''(\xi_n^i)\left(\bar z_n^i-1\right)^3\geq-\frac{C^3}{6c\sqrt{n}}.
\end{equation}
Clearly, \eqref{melgl}--\eqref{eq:t2:s1:final} imply \eqref{melgl0} (with $K=\frac{C^3}{6c}$).

\substep{1.2} Argument for \eqref{pf:T2:Step1a}.

For every $n\in\mathbb N$, we choose $\hat i_n\in\{1,\dots,n\}$ such that $-J_{\hat i_n}(1)=\beta_n$ (see \eqref{def:beta}) and define $z_n\in\mathbb R^n$ as 
\begin{equation*}
z_n^i=\begin{cases}1&\mbox{if }i\in\{1,\dots,n\}\setminus \{\hat i_n\}\\1+n(k_n-1)&\mbox{if }i=\hat i_n\end{cases}.
\end{equation*}
Since $\frac1n\sum_{i=1}^nz_n^i=k_n=1+n^{-\frac12}\sqrt{\frac{\beta}{\underline{\alpha}}}(1+\epsilon)$, we have
\begin{align*}
n\biggl(M_n(k_n)-\frac1n\sum_{i=1}^nJ_i(1)\biggr)\leq& J_{\hat i_n}(1+n(k_n-1))-J_{\hat i_n}(1)\\
\leq& \psi^+\biggl(1+\sqrt{n}\sqrt{\frac{\beta}{\underline\alpha}}(1+\epsilon)\biggr)+\beta_n,
\end{align*}
where the second inequality holds by (A2) and the choice of $\hat i_n$. Now, \eqref{pf:T2:Step1a} follows from \eqref{lim:ergodic:t2} and assumption (A2).

\step 2 Proof of $\underline A:=\liminf_{n\to\infty}\gamma_n^*\geq\sqrt{\tfrac{\beta}{\underline{\alpha}}}$.
	
We show that for every $\epsilon>0$ there exists $\bar N\in\mathbb N$ such that
\begin{align}\label{fN2}
 \ell_n^*\geq1+n^{-\frac12}\sqrt{\frac{\beta}{\underline{\alpha}}}(1-\epsilon)=:k_n\quad\text{for}\  n>\overline{N}.
\end{align}
Note that \eqref{fN2} implies $\liminf_{n\to\infty}\gamma_n^*\geq \sqrt{\frac{\beta}{\underline{\alpha}}}(1-\epsilon)$ for all $\epsilon>0$, and thus the claim.

Let $(\bar z_n)_n$ be a sequence satisfying for all $n\in\mathbb N$,
\begin{equation}\label{eq:barznkn}
\frac1n\sum_{i=1}^n\bar z_n^i=k_n,\qquad M_n(k_n)=\frac1n\sum_{i=1}^nJ_i(\bar z_n^i).
\end{equation}
To prove \eqref{fN2}, we only need to show that
\begin{equation}\label{neu:10}
  z_n^i\leq z_{\rm frac}^i\text{ for all $i\in\{1,\dots,n\}$ for $n$ sufficiently large},
\end{equation}
depending only on $\underline\alpha$ $\beta$, $c$, and $\epsilon>0$.

\substep{2.1} We show that
\begin{equation}\label{eq:t2s22}
 \limsup_{n\to\infty}n\biggl(M_n(k_n)-\frac1n\sum_{i=1}^nJ_i(1)\biggr)\leq \beta(1-\epsilon).
\end{equation}
Set
$$
\hat{z}_n^i:=1+n^{-\frac12}\sqrt{\frac{\beta}{\underline{\alpha}}}(1-\epsilon)\left(\frac1n\sum_{i=1}^{n}\frac{1}{\alpha_i}\right)^{-1}\frac{1}{\alpha_i},
$$
where $\alpha_i:=\frac12 J_i''(1)$. By construction, we have 
\begin{equation}\label{eq:hatznt2s2}
\frac1n\sum_{i=1}^n\hat z_n^i=k_n,\qquad 0\leq \hat z_n^i-1\leq n^{-\frac12}C
\end{equation}
where $C<\infty$ depends only on $\underline \alpha,\beta$ and $c>0$ from (A4) (note that (A4) implies $\alpha_i\leq \frac1{2c}$ and $\frac1{\alpha_i}\leq \frac2c$). Hence, a Taylor expansion of $J_i$ at $1$ and (A4) yield for $n$ sufficiently large 
\begin{align*}
 \sum_{i=1}^n \left(J_i(\hat z_n^i)-J_i(1)\right)&\leq\sum_{i=1}^{n}\alpha_i\left(\hat{z}_n^i-1\right)^2+\frac1{6c}\sum_{i=1}^{n}\left(\hat{z}_n^i-1\right)^3\\
	&\leq\frac{\beta}{\underline{\alpha}}(1-\epsilon)^2\biggl(\frac1n\sum_{i=1}^n\frac1{\alpha_i}\biggr)^{-1}+\frac{C^3}{6c}n^{-\frac12},	\end{align*}
	where $C<\infty$ is the same as in \eqref{eq:hatznt2s2}. Finally, \eqref{lim:ergodic:t2} implies $(\frac1n\sum_{i=1}^n\frac1{\alpha_i})^{-1}\leq \underline \alpha(1+\epsilon)$ for $n$ sufficiently large and thus \eqref{eq:t2s22} follows.
	
        \substep{2.2} We now prove \eqref{neu:10} by contraposition. Suppose $\bar z_n^{\hat i}>z_{\rm frac}^{\hat i}$ for some $\hat i\in\{1,\dots,n\}$. Then Proposition~\ref{prop:schrankeableitung} yields $\bar z_n^{\hat i}\geq N_n$ for some $(N_n)$ with $N_n\to \infty$, and thus $J_{\hat i}(\bar z_n^{\hat i})\geq -\sup_{s\geq N_n}\psi^+(s)$ by (A2). Hence, with $J_i(\bar z_n^i)\geq J_i(1)$ and $-J_{\hat i}(1)\geq\beta$, we therefore get
\begin{align*}
  \sum_{i=1}^n(J_i(\bar z_n^i)-J_i(1))\geq J_{\hat i}(\bar z_n^{\hat i})-J_{\hat i}(1)\geq \beta-\sup_{s\geq N_n}\psi^+(s).
\end{align*}
Since $\sup_{s\geq N_n}\psi^+(s)\to 0$ for $n\to\infty$, the above lower bound combined with the upper bound \eqref{eq:t2s22} and \eqref{eq:barznkn} yields a contradiction for $n$ sufficiently large, and thus \eqref{neu:10} follows.

\step 3 Argument for $\beta_n\to \beta$ almost surely in \eqref{lim:ergodic:t2}.

The sequence $(\beta_n(\omega))_n\subset\mathbb R$ is decreasing and it holds $\beta_n(\omega)\geq \beta$ for all $n\in\mathbb N$. Hence, there exists $\hat \beta(\omega)\geq \beta$ such that
$$
\lim_{n\to\infty}\beta_n(\omega)=\hat \beta(\omega)\geq \beta.
$$
It remains to show that $\hat\beta(\omega)=\beta$ for $\mathbb P$-a.e.\ $\omega\in\Omega$. We argue by contradiction and therefore suppose that there exist $\epsilon>0$ and a set $\Omega'\subset\Omega$ with positive measure such that $\hat\beta(\omega)\geq \beta+\epsilon$ for all $\omega\in \Omega'$. Then we obtain for all $\omega\in\Omega'$ that
$$
\limsup_{n\to\infty}\frac1n\sum_{i=1}^n\chi_{\left\{-J(\tau_i\omega,1)\leq \beta+\frac12\epsilon\right\}}(\tau_i\omega)=0,
$$
where $\chi_A$ denotes the indicator function. Clearly this contradicts the ergodic theorem and the definition of $\beta$ in the form
\begin{equation*}
\lim_{n\to\infty}\frac1n\sum_{i=1}^n\chi_{\left\{-J(\tau_i\omega,1)\leq \beta+\frac12\epsilon\right\}}=\mathbb E\left[\chi_{\left\{-J(1)\leq \beta+\frac12\epsilon\right\}}\right]>0\qquad\mbox{for $\mathbb P$-a.e.\ $\omega\in \Omega$}.
\end{equation*}
Hence the theorem is proven.
\end{proof}

\textbf{Acknowledgments.} During the work on this project, LL was affiliated most of the time with the Institute of Mathematics at the University of Würzburg, Germany.  LL gratefully acknowledges the kind hospitality of the Technische Universität Dresden during her research visits, which were partially funded by the Deutsche Forschungsgemeinschaft
(DFG, German Research Foundation) – within project 405009441 and TU Dresden’s
Institutional Strategy “The Synergetic University”.


\begin{thebibliography}{9}

\bibitem{AlicandroCicaleseGloria2011} R.\ Alicandro, M.\ Cicalese and A.\ Gloria, Integral representation results for energies defined on stochastic lattices and application to nonlinear elasticity, \textit{ Arch.\ Ration.\ Mech.\ Anal.} \textbf{200} (2011), 881--943. 
\bibitem{BraidesDalMasoGarroni1999} A.\ Braides, G.\ Dal Maso and A.\ Garroni, Variational formulation of softening phenomena in fracture mechanics: The one-dimensional case, \textit{Arch.\ Ration.\ Mech.\ Anal.} \textbf{146} (1999), 23--58.\ 
 \bibitem{BraidesGelli2002} A.\ Braides and M.\ S.\ Gelli, Continuum limits of discrete systems without convexity hypotheses, \textit{Math.\ Mech.\ Solids}, {\bf 7} (2002), 41--66.
 \bibitem{BraidesLewOrtiz2006} A.\ Braides, A.\ Lew and M.\ Ortiz, Effective cohesive behavior of Layers of interatomic planes, \textit{Arch.\ Ration.\ Mech.\ Anal.} \textbf{180} (2006), 151--182.
 \bibitem{BraidesSolci2016} A.~Braides and M.~Solci, Asymptotic analysis of Lennard-Jones systems beyond the nearest-neighbour setting: A one-dimensional prototypical case, \textit{Math.\ Mech.\ Solids} \textbf{21} (2016), 915--930.

\bibitem{DalMasoModica1985} G. Dal Maso and L. Modica, Nonlinear stochastic homogenization and ergodic theory, \textit{J.\ Reine Angew.\ Math.} {\bf 368} (1986), 28--42.
\bibitem{FriedrichStefanelli2020}
M.~Friedrich and U.~Stefanelli, Crystallization in a One-Dimensional Periodic Landscape. {\em J. Stat. Phys.} \textbf{179} (2020), 485--501.
\bibitem{HallHudsonvanMeurs2018}
C.L.~Hall, T.~Hudson and P.~van Meurs, Asymptotic Analysis of Boundary Layers in a Repulsive Particle System. {\em Acta Appl. Math.} \textbf{153} (2018), 1--54.
 \bibitem{IosifescuLichtMichaille2001} O.\ Iosifescu, C.\ Licht and G.\ Michaille, Variational limit of a one dimensional discrete and statistically homogeneous system of material points, \textit{Asymptot.\ Anal.} \textbf{28} (2001), 309--329.
\bibitem{JansenKoenigSchmidtTheil2021}
S.~Jansen, W.~König, B.~Schmidt and F.~Theil, Surface Energy and Boundary Layers for a Chain of Atoms at Low Temperature. {\em Arch. Rational Mech. Anal.} \textbf{239} (2021), 915--980.
\bibitem{KimuravanMeurs2021}
M.~Kimura and P.~van Meurs, Quantitative estimate of the continuum approximations of interacting particle systems in one dimension. {\em 
SIAM J. Math. Anal.} \textbf{53} (2021),  681--709.
\bibitem{Lauerbach-Diss}
L.~Lauerbach, \textit{Stochastic Homogenization in the Passage from Discrete to Continuous Systems---Fracture in Composite Materials}, Ph.D.~thesis, University of Würzburg, 2020.
\bibitem{unserpaper1} L.~Lauerbach, N.~Neukamm, M.~Schäffner and A.~Schlömerkemper, Mechanical behaviour of heterogeneous nanochains in the $\Gamma$-limit of stochastic particle systems. arXiv:1909.06607.
\bibitem{unserpaper2} L.~Lauerbach and A.~Schlömerkemper, Derivation of a variational model for brittle fracture from a random heterogeneous particle chain. arXiv:2104.08607. 
\bibitem{LauerbachSchaeffnerSchloemerkemper2017} L.~Lauerbach, M.~Schäffner and A.~Schlömerkemper, On continuum limits of heterogeneous discrete systems modelling cracks in composite materials, \textit{GAMM‐Mitt.} \textbf{40} (2017), 184--206.
\bibitem{NeukammSchaeffnerSchloemerkemper2017} S.\ Neukamm, M.\ Sch\"affner and A.\ Schl\"omerkemper, Stochastic homogenization of nonconvex discrete energies with degenerate growth, \textit{SIAM J.\ Math.\ Anal.} \textbf{49} (2017), 1761--1809.
\bibitem{ScardiaSchloemerkemperZanini2011} L.\ Scardia, A.\ Schl\"omerkemper and C.\ Zanini, Boundary layer energies for nonconvex discrete systems, \textit{Math.\ Models Methods Appl.\ Sci.} \textbf{21} (2011), 777--817.
 \bibitem{SchaeffnerSchloemerkemper2015} M.\ Sch\"affner and A.\ Schl\"omerkemper, On Lennard-Jones systems with finite range interactions and their asymptotic analysis, \textit{Netw. Heterog. Media} \textbf{13} (2018), 95--118.
 \bibitem{Truskinovsky1996} L.\ Truskinovsky, Fracture as a phase transition, in \textit{Contemporary Research in the Mechanics and Mathematics of Materials} (1996), 322--332.

	
\end{thebibliography}
\end{document}